\begin{document}

\newcommand{\C}{\mathbb{C}}
\newcommand{\F}{\mathbb{F}}
\newcommand{\R}{\mathbb{R}}
\newcommand{\Q}{\mathbb{Q}}
\newcommand{\N}{\mathbb{N}}
\newcommand{\Z}{\mathbb{Z}}
\newcommand{\PP}{\mathbb{P}}
\newcommand{\HH}{\mathbb{H}}
\newcommand{\MU}{\mathbb{\mu}}
\newcommand{\TT}{\mathbb{T}}
\newcommand{\GG}{\mathbb{G}}
\newcommand{\SSS}{\mathbb{S}}
\newcommand{\m}{\mathrm{m}}

\newtheorem{thm}{Theorem}
\newtheorem{defn}[thm]{Definition}
\newtheorem{prop}[thm]{Proposition}
\newtheorem{coro}[thm]{Corollary}
\newtheorem{lem}[thm]{Lemma}
\newtheorem{conj}[thm]{Conjecture}
\newtheorem{rem}[thm]{Remark}
\newtheorem{ex}[thm]{Example}
\newtheorem{exs}[thm]{Examples}
\newcommand{\dd}{\;\mathrm{d}}
\newcommand{\e}{\mathrm{e}}
\newcommand{\pf}{\noindent {\bf PROOF.} \quad}
\newcommand{\res}{\mathrm{Res}}

\newcommand{\Li}{\mathrm{Li}} 
\newcommand{\I}{\mathrm{I}} 
\newcommand{\re}{\mathop{\mathrm{Re}}} 
\newcommand{\im}{\mathop{\mathrm{Im}}} 
\newcommand{\ii}{\mathrm{i}} 
\newcommand{\Lf}{\mathrm{L}} 
\newtheorem{obs}[thm]{Observation} 

\newcommand{\tushant}[1]{{\color{blue}{Tushant: #1}} }
\newcommand{\matilde}[1]{{\color{red}{Matilde: #1}} }

\title{The Mahler measure for arbitrary tori}
\author{Matilde Lal\'in}
\address{D\'{e}partment de Math\'{e}matique et de Statistique, Universit\'{e} de Montr\'{e}al. CP 6128, succ. Centre-Ville. Montreal, QC H3C 3J7, Canada}
\email{mlalin@dms.umontreal.ca}

\author{Tushant Mittal}
\address{Department of Computer Science and Engineering, Indian Institute of Technology Kanpur, India}
\email{mittaltushant@gmail.com}

\thanks{This research was supported by the Natural Sciences and Engineering Research Council
of Canada [Discovery Grant 355412-2013 to ML] and Mitacs [Globalink Research Internship to TM]}

\subjclass[2010]{Primary 11R06; Secondary  11G05, 11F66, 19F27, 33E05}
\keywords{Mahler measure; special values of $L$-functions; elliptic curve; elliptic regulator}

\begin{abstract} We consider a variation of the Mahler measure where the defining integral is performed over a more general torus. We focus our investigation  on two particular polynomials related to certain elliptic curve $E$ and we establish new formulas
for this variation of the Mahler measure in terms of $L'(E,0)$.
\end{abstract}

\maketitle

\section{Introduction}

The Mahler measure of a non-zero rational function $P \in \C(x_1,\dots,x_n)$ is defined by 
\begin{equation*}
 \m(P):=\frac{1}{(2\pi i)^n}\int_{\mathbb{T}^n}\log|P(x_1,\dots, x_n)|\frac{dx_1}{x_1}\cdots \frac{dx_n}{x_n},
\end{equation*}
where $\mathbb{T}^n=\{(x_1,\dots,x_n)\in \mathbb{C}^n : |x_1|=\cdots=|x_n|=1\}$. 

This construction, when applied to multivariable polynomials, often yields values of special functions, including some of number theoretic interest, such as 
the Riemann zeta-function and $L$-functions associated to arithmetic-geometric objects such as elliptic curves. 

The initial formulas for multivariable polynomials were due to Smyth \cite{S1,S1a}. These early results included
\[\m(x+y+1)= \frac{3 \sqrt{3}}{4 \pi} L(\chi_{-3},2)= L '(\chi_{-3}, -1),\]
where
\[ L(\chi_{-3}, s) = \sum_{n=1}^\infty \frac{\chi_{-3}(n)}{n^s}
\quad \mbox{and} \quad \chi_{-3}(n) = \left \{ \begin{array}{rl} 1 & \mbox{if}\quad n \equiv1 \; \mod \; 3, \\ -1 & \mbox{if}\quad n \equiv -1\;  \mod \; 3, \\ 0 & \mbox{if}\quad n \equiv 0 \; \mod \; 3, \end{array} \right. \]
is a Dirichlet $L$-function. 

The formula above was generalized by Cassaigne and Maillot \cite{CM} in the following fashion. Let $a, b, c$ be nonzero complex numbers. Then 
\begin{equation} \label{eq:Maillot}
\pi \m(ax+by+c) = \left \{ \begin{array}{lr} \alpha \log |a| + \beta \log |b| +\gamma \log |c| + D\left(\left|\frac{a}{b}\right| 
e^{i \gamma}\right) & \quad \triangle,\\\\ \pi \log \max \{ |a|, |b|, |c|\} & \quad \mbox{not}\, \triangle, \end{array} \right. 
\end{equation}
where $\triangle$ stands for the statement that $|a|$, $|b|$, and
$|c|$ are the lengths of the sides of a triangle; and $\alpha$,
$\beta$, and $\gamma$ are the angles opposite to the sides of lengths
$|a|$, $|b|$ and $|c|$ respectively (Figure \ref{figmaillot}). The function $D$ is the Bloch--Wigner dilogarithm (to be defined in Section \ref{sec:regulator}, equation \eqref{Bloch-Wigner}) and the corresponding term  then codifies the volume of an ideal  hyperbolic tetrahedron in $\mathbb{H}^3 \cong \C \times \R_{\geq 0}$ with basis the triangle whose sides are $|a|$, $|b|$, and $|c|$ and fourth vertex infinity.
\begin{figure}
\includegraphics[width=18pc]{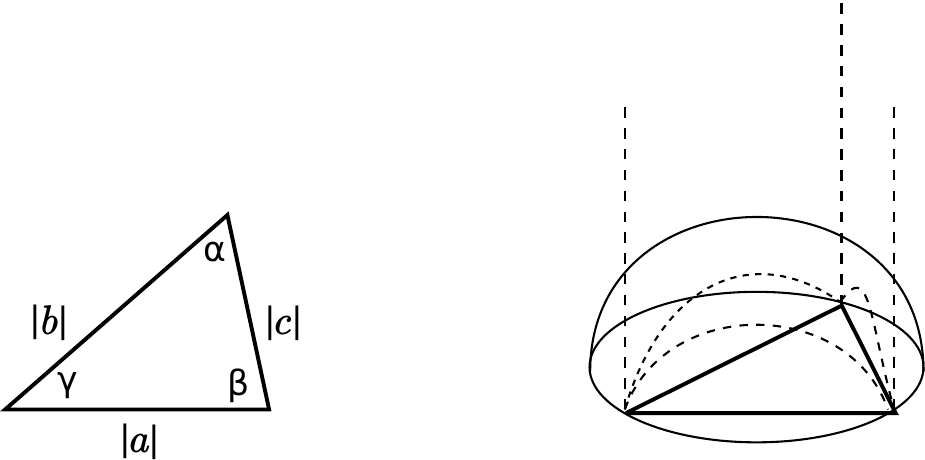}
\caption{Relation among the parameters in Maillot's formula.}
\label{figmaillot}
\end{figure}

We remark that the formulas above also apply when the constant coefficient is replaced by a variable, in the sense that $\m(ax+by+cz)=\m(ax+by+c)$.

The connection of Mahler measure with elliptic curves was predicted by Deninger \cite{Deninger}, and examined in detail by Boyd \cite{Bo98} and Rodriguez-Villegas \cite{RV}. Boyd computed numerical examples very systematically for several families of polynomials. For example, he considered the families
\begin{align*}
R_\alpha(x,y):=&(1+x)(1+y)(x+y)-\alpha xy,\\
S_{k,\beta}(x,y)=& y^2+kxy-x^3-\beta x,
\end{align*}
where $\alpha, k, \beta$ are integral parameters. Boyd found that for $|\alpha|\leq 100$, and for many cases of $\beta, k${\footnote{Boyd gave precise conjectural conditions for $\beta$ and $k$.}},
\begin{align*}
\m(R_\alpha(x,y))\stackrel{?}{=}&r_\alpha L'(E_{N(\alpha)},0),\\
\m(S_{k,\beta}(x,y))\stackrel{?}{=}& \frac{1}{4}\log|\beta|+s_{k,b}L'(E_{N(k,\beta)},0),
\end{align*}
where $r_\alpha$, $s_{k,\beta}$  are rational numbers of low height, the $L$-functions are attached to elliptic curves that 
are defined by $R_\alpha(x,y)=0$, and $S_{k,\beta}(x,y)=0$ respectively, and the question mark stands for a numerical formula that is true for at least 20 decimal places.
In all cases, $N$ denotes the conductor of the elliptic curve, which is a function on the coefficients of the corresponding polynomial.

Some of Boyd's conjectures have been proven. In particular, Rogers and Zudilin \cite{RZ12} proved that
\begin{align}\label{eq:Rogers-Zudilin}
\m(R_{-2}(x,y))=&\frac{15}{\pi^2}L(E_{20},2)=3 L'(E_{20},0),\\
\m(R_{4}(x,y))= &\frac{10}{\pi^2}L(E_{20},2)=2 L'(E_{20},0). \nonumber
\end{align}
Mellit \cite{Me12} proved similar formulas for $\alpha=1,7,-8$, which correspond to the conductor 14 case. 

On the other hand, the Mahler measure of $S_{2,-1}(x,y)$, which is the Weierstrass form of an elliptic curve of conductor 20,
was studied by Touafek \cite{Touafek, Touafek-thesis}, who exhibited an argument that leads to 
\[\m(S_{2,-1}(x,y))=\frac{2}{3}\m(R_{-2}(x,y)),\]
provided that one properly establishes a relationship between certain cycles in $H_1(E,\Z)$. (More details of this argument
were completed by Bertin \cite{Bertin-banff} who developed this idea to give an alternative proof of equation \eqref{eq:Rogers-Zudilin}.)

In this work, we consider the following extension of Mahler measure. 
\begin{defn} Let $a_1,\dots,a_n \in \R_{>0}$. The $(a_1,\dots, a_n)$-Mahler measure of a non-zero rational function $P \in \C(x_1,\dots,x_n)$ is defined by 
\begin{equation*}
 \m_{a_1,\dots, a_n}(P):=\frac{1}{(2\pi i)^n}\int_{\mathbb{T}_{a_1}\times \dots \times\mathbb{T}_{a_n}}\log|P(x_1,\dots, x_n)|\frac{dx_1}{x_1}\cdots \frac{dx_n}{x_n},
\end{equation*}
where $\mathbb{T}_a=\{x\in \mathbb{C} : |x|=a\}$. 
\end{defn}
This idea of considering arbitrary tori in the integration was initially proposed to us by Rodriguez-Villegas a long time ago. 

Given this definition, Cassaigne and Maillot's formula can be interpreted as $\m_{a,b,c}(x+y+z)$. Some cases of the  formula of Vandervelde \cite{Vandervelde} for the Mahler measure of $axy+bx+cy+d$ may be also viewed in this context.  

Our goal is to explore this definition for other formulas, in particular those involving elliptic curves. More precisely, we connect this idea with Boyd's examples in order to prove the following results.

\begin{thm}\label{bigthm} 
\begin{align*}
\m_{a,a}(y^2+2xy-x^3+x)&=\left \{\begin{array}{ll}
2 \log a+ 2 L'(E_{20},0) & \frac{\sqrt{5}-1}{2} \leq a \leq \frac{1+\sqrt{5}}{2}\\\\
                                 3 \log a & a \geq \frac{3+\sqrt{13}}{2},\\
                                 \log a &  0< a\leq \frac{-3+\sqrt{13}}{2}.
                                 \end{array}
 \right.\\ 
\m_{a^2,a}((1+x)(1+y)(x+y)+2 xy)&= \left \{\begin{array}{ll}
4 \log a  + 3 L'(E_{20},0) & 1\leq a\leq \sqrt{\frac{1+\sqrt{5}+\sqrt{2\sqrt{5}+2}}{2}},\\
2 \log a  + 3 L'(E_{20},0) & \sqrt{\frac{1+\sqrt{5}-\sqrt{2\sqrt{5}+2}}{2}}\leq a \leq 1.
\end{array}\right.
\end{align*}
\end{thm}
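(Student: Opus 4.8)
The plan is to reduce the computation of these two $(a_1,a_2)$-Mahler measures to the ordinary Mahler measures of the associated polynomials via a deformation argument in the parameter $a$, combined with Jensen's formula applied on one variable. For the first polynomial $S_{2,1}(x,y) = y^2 + 2xy - x^3 + x$, I would write $\m_{a,a}(S_{2,1})$ as an iterated integral: first integrate over $x$ on the circle $|x| = a$ of $\frac{1}{2\pi i}\int_{\TT_a}\log|y^2+2xy-x^3+x|\frac{dx}{x}$, treating $y$ as a parameter on $\TT_a$, and use Jensen's formula in $x$ once the cubic in $x$ is factored. This produces a sum of terms $\log^+$ of the absolute values of the roots $x_j(y)$, with the $\log a$ contributions coming from whether those roots lie inside or outside $\TT_a$. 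The regions of $a$ in the statement are exactly the intervals where the configuration of roots relative to $\TT_a$ is constant, so on each such region the answer is a fixed linear-in-$\log a$ term plus a residual integral over $y \in \TT_a$ of a regulator-type integrand.

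**Next I would** identify that residual integral with the elliptic regulator. The key point is that $\log|x_j(y)|$ type integrands, after the Jensen step, become integrals of $\eta(x,y) = \log|x|\,d\arg y - \log|y|\,d\arg x$ (the differential form from Section~\ref{sec:regulator}) over a cycle in the elliptic curve $E_{20}$ defined by $S_{2,1}(x,y)=0$ or $(1+x)(1+y)(x+y)+2xy=0$. For the $a=1$ endpoint this recovers Boyd's classical identities: $\m_{1,1}(S_{2,1})$ relates to $\m(R_{-2})$ by Touafek's/Bertin's argument, giving the $2L'(E_{20},0)$ and $3L'(E_{20},0)$ constants respectively via \eqref{eq:Rogers-Zudilin}. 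The content of the theorem beyond $a=1$ is that as $a$ varies over the stated interval, the regulator integral does \emph{not} change — only the $\log a$ prefactor does. This should follow because the integration cycle on $E_{20}$ varies continuously (no root of the relevant resultant crosses $\TT_a$ within the interval), and the exactness properties of $\eta$ on the curve (it is closed, and changes by exact forms / explicit $d\log$ terms under homotopy) confine all the $a$-dependence to boundary terms that evaluate to rational multiples of $\log a$.

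**The main obstacle** I expect is controlling precisely which roots of the defining polynomial (viewed as a polynomial in $x$ with coefficients in $\C[y]$, then restricted to $y \in \TT_a$) lie inside versus outside the circle $|x|=a$, and showing the crossing points are exactly the algebraic numbers appearing as the interval endpoints (e.g. $\frac{\sqrt5-1}{2}$, $\frac{3+\sqrt{13}}{2}$, and the nested-radical expressions in the second formula). This is a real-algebraic-geometry computation: one must analyze the resultant (or discriminant-like condition) $\mathrm{Res}_x(S_{2,1}(x,y), \text{(something encoding }|x|=a|y|=a))$ and determine for which $a$ the curve $S_{2,1}=0$ meets the torus $\TT_a \times \TT_a$. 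The endpoints of the "good" intervals are precisely the values of $a$ where $E_{20}$ is tangent to or crosses $\TT_a^2$, changing the topology of the integration path. Outside the stated intervals (the $a \geq \frac{3+\sqrt{13}}{2}$ and $0 < a \leq \frac{-3+\sqrt{13}}{2}$ cases), the curve does not meet the torus at all, the integrand has no singularity, and the whole integral collapses to the pure $\log a$ term by a direct Jensen computation — this is the easy case and I would dispatch it first as a warm-up and sanity check on the normalization.

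**Finally**, to assemble the regulator computation into the $L'(E_{20},0)$ value, I would invoke Beilinson's theorem (or the explicit version for $E_{20}$ already established in the literature via \eqref{eq:Rogers-Zudilin} and Bertin's work), checking that the cycle arising here is homologous to a rational multiple of the one used by Rogers--Zudilin. The rational coefficients $2$ and $3$ in front of $L'(E_{20},0)$ should match, respectively, $\frac{2}{3}\cdot 3$ from Touafek's relation $\m(S_{2,-1}) = \frac23 \m(R_{-2})$ adapted to $S_{2,1}$, and the coefficient $3$ directly from $\m(R_{-2}) = 3L'(E_{20},0)$. The delicate bookkeeping is orientation and multiplicity of the cycle, exactly the point Touafek left open and Bertin completed; I would follow Bertin's resolution and then verify the extension to $a \neq 1$ is a continuous deformation that preserves the homology class.
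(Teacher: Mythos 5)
Your skeleton matches the paper's: Jensen's formula to isolate a regulator-type integral plus $\log a$ terms, constancy of the regulator term on each interval of ``good'' $a$, anchoring at $a=1$ via \eqref{eq:Rogers-Zudilin} and the Touafek--Bertin relation, and a direct one-variable collapse in the outer ranges (the paper does this via $Y_+Y_-=-X^3+X$, which your ``curve misses the torus'' remark reproduces in spirit). But the three places where the paper does its actual work are either asserted or outsourced in your plan. First, the $a$-independence of the regulator integral: you invoke homotopy invariance because $\eta$ is closed, but $\eta(x,y)$ is closed only away from the zeros and poles of $x$ and $y$, and your moving family of loops does meet such points --- for $R_{-2}$ the branch $y_-$ acquires a pole above $x=-1$ (the image of the torsion point $5P$), which lies on the loop exactly at the anchoring value $a=1$. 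So ``changes by exact forms under homotopy'' does not apply verbatim; one must either check that the relevant residues of $\eta$ vanish at the crossed points, or do what the paper does in Lemmas \ref{lemma1} and \ref{lemma2}: compute the period $\int_{\gamma_a}\omega$ explicitly in terms of complete elliptic integrals, conclude the class in $H_1(E,\Z)^-$ is the same for all admissible $a$ (and the same for both curves), and then use that the regulator pairing depends only on that class. Relatedly, citing ``Touafek's/Bertin's argument'' for the constant $2L'(E_{20},0)$ at $a=1$ re-imports the very gap the paper is filling: Touafek's derivation is complete only once the cycles in $H_1(E,\Z)$ are matched, which is exactly what Proposition \ref{prop} together with the period computations supplies.

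Second, your accounting of the $\log a$ terms is not sufficient to produce the stated coefficients. After the Jensen step the $a$-dependence is not just ``roots inside versus outside $\TT_a$'': there is the term $\log a\cdot\frac{1}{2\pi}\int_{\gamma_a} d\arg y_-$, a winding number that must be evaluated, and it equals $1$ for $S_{2,-1}$ but $\tfrac12$ for $R_{-2}$ (an effect of the $x=x_1^2$ double cover); these are Lemmas \ref{lemma15} and \ref{lemma16}, proved by a symmetry-plus-holomorphy argument, and without them you cannot get the coefficients $2$, $4$, $2$ of $\log a$ in the theorem. Third, the interval analysis you correctly flag as the main obstacle (the paper's Lemmas \ref{lemma-path-S} and \ref{lemma-path-R}, which produce the endpoints $\frac{\sqrt5\pm1}{2}$, $\frac{3+\sqrt{13}}{2}$ and the nested radicals) is left undone, and your choice to run Jensen in the variable $x$, which is cubic for $y^2+2xy-x^3+x$, would make that analysis substantially harder than the paper's quadratic-in-$y$ treatment and would also forfeit the clean product-of-roots reduction in the outer ranges. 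In short: right strategy, but the cycle identification, the winding-number computations, and the path analysis are genuine missing steps rather than routine details.
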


The results above are, in some sense, quite restricted, due to the technical difficulties involving the study of the integration path. They are similar in generality to some earlier formulas from \cite{L} 
 that involve a single varying parameter and relate Mahler measures to polylogarithms. By changing variables $y \rightarrow ay$ and $x\rightarrow ax$ and dividing by $a^2$ in the first polynomial, and $y \rightarrow ay$ and $x\rightarrow a^2x$ and dividing by $a$ in the second polynomial, 
we obtain the following corollary which expresses the same results in terms of the classical Mahler measure of non-tempered polynomials (as defined in Section\ref{sec:regulator}, Definition \ref{tempered}), which are very interesting in their own, since the $K$-theory framework does not completely apply to these cases. 
\begin{coro}
\begin{align*}
 \m(y^2+2xy-ax^3+a^{-1}x)&=\left \{\begin{array}{ll}
2 L'(E_{20},0)& \frac{\sqrt{5}-1}{2} \leq a \leq \frac{1+\sqrt{5}}{2}\\\\
                                 \log a & a \geq \frac{3+\sqrt{13}}{2},\\
                                 \log \left(a^{-1}\right) &  0< a\leq \frac{-3+\sqrt{13}}{2}.
                                 \end{array}
 \right.\\
\m((1+a^2x)(1+ay)(ax+y)+2 a^2xy)&= \left \{\begin{array}{ll}
3 \log a  + 3 L'(E_{20},0) & 1\leq a\leq \sqrt{\frac{1+\sqrt{5}+\sqrt{2\sqrt{5}+2}}{2}},\\
 \log a  + 3 L'(E_{20},0) & \sqrt{\frac{1+\sqrt{5}-\sqrt{2\sqrt{5}+2}}{2}}\leq a \leq 1.
\end{array}\right.
 \end{align*}
 \end{coro}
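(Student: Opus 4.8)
The plan is to derive the Corollary from Theorem \ref{bigthm} by the elementary change-of-variables trick already sketched in the text, using the fundamental scaling property of the generalized Mahler measure. The key observation is that for $a_1,\dots,a_n\in\R_{>0}$ one has
\begin{equation*}
\m_{a_1,\dots,a_n}\big(P(x_1,\dots,x_n)\big)=\m\big(P(a_1x_1,\dots,a_nx_n)\big),
\end{equation*}
which follows immediately from the substitution $x_j\mapsto a_jx_j$: this maps the circle $\TT_{a_j}$ bijectively onto the unit circle $\TT_1$ and preserves the Haar measure $\frac{dx_j}{x_j}$ (since $\frac{d(a_jx_j)}{a_jx_j}=\frac{dx_j}{x_j}$), so the defining integral for $\m_{a_1,\dots,a_n}(P)$ becomes the defining integral for $\m(P(a_1x_1,\dots,a_nx_n))$. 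First I would state and prove this lemma in a single line.

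Next I would apply it to each of the two families in Theorem \ref{bigthm}. For the first polynomial, set $P(x,y)=y^2+2xy-x^3+x$ and take $a_1=a_2=a$; then $\m_{a,a}(P)=\m\big(P(ax,ay)\big)=\m\big(a^2y^2+2a^2xy-a^3x^3+ax\big)$. Pulling out the common factor $a^2$ and using the identity $\m(cQ)=\log|c|+\m(Q)$ for a nonzero constant $c$, this equals $2\log a+\m\big(y^2+2xy-ax^3+a^{-1}x\big)$. Comparing with the first display of Theorem \ref{bigthm} and cancelling $2\log a$, $3\log a$, or $\log a$ respectively on the three ranges yields exactly the three cases of the first formula in the Corollary. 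For the second polynomial, set $P(x,y)=(1+x)(1+y)(x+y)+2xy$ and substitute $x\mapsto a^2x$, $y\mapsto ay$, so that $\m_{a^2,a}(P)=\m\big((1+a^2x)(1+ay)(ax+y)+2a^2xy\big)\cdot$; here one must check that the factor pulled out of $(1+a^2x)(1+ay)(ax+y)$ together with the $2a^2xy$ term is $a$ (indeed $(ay)(ax)=a^2xy$ matches the leading behaviour and the cross term $2a^2xy$ has the same power, while the product $(1+a^2x)(1+ay)(ax+y)$ contributes an overall $a$ from the factor $(ax+y)$ after one rewrites $ax+y=a(x+a^{-1}y)$), giving $\m_{a^2,a}(P)=\log a+\m\big((1+a^2x)(1+ay)(ax+y)+2a^2xy\big)$ — more carefully, one verifies $(1+a^2x)(1+ay)(ax+y)+2a^2xy = a\big[(1+a^2x)(1+ay)(x+a^{-1}y)+2ax y\big]$, which is not quite homogeneous, so the cleanest route is to track the substitution directly on the right-hand sides of Theorem \ref{bigthm} rather than factor by hand. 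Thus I would instead argue: the quantity $\m\big((1+a^2x)(1+ay)(ax+y)+2a^2xy\big)$ equals $\m_{a^2,a}\big((1+x)(1+y)(x+y)+2xy\big)$ minus $\log a$ (the $\log a$ coming from the single factor of $a$ that scales out), and substituting the second display of Theorem \ref{bigthm} gives $4\log a+3L'(E_{20},0)-\log a=3\log a+3L'(E_{20},0)$ on the first range and $2\log a+3L'(E_{20},0)-\log a=\log a+3L'(E_{20},0)$ on the second, matching the Corollary.

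The main obstacle — really the only point requiring care — is the bookkeeping of the constant factor in the second polynomial: one must confirm that under $x\mapsto a^2x,\ y\mapsto ay$ the polynomial $(1+x)(1+y)(x+y)+2xy$ transforms into $a$ times the stated polynomial $(1+a^2x)(1+ay)(ax+y)+2a^2xy$ divided by... precisely, that $\m_{a^2,a}(P)-\m\big((1+a^2x)(1+ay)(ax+y)+2a^2xy\big)=\log a$. This is verified by expanding: $(1+X)(1+Y)(X+Y)+2XY$ with $X=a^2x$, $Y=ay$ becomes $(1+a^2x)(1+ay)(a^2x+ay)+2a^3xy=a\big[(1+a^2x)(1+ay)(ax+y)+2a^2xy\big]$, so the factored-out constant is exactly $a$ and $\m(cQ)=\log|c|+\m(Q)$ finishes it. Everything else is the scaling lemma plus arithmetic on the endpoint inequalities (which are unchanged, as the substitution does not alter the region of $a$), so no genuine difficulty remains beyond this one expansion.
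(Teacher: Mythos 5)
Your proposal is correct and follows essentially the same route as the paper: the corollary is obtained from Theorem \ref{bigthm} precisely by the substitutions $x\mapsto ax$, $y\mapsto ay$ (dividing by $a^2$) and $x\mapsto a^2x$, $y\mapsto ay$ (dividing by $a$), i.e.\ your scaling identity $\m_{a_1,\dots,a_n}(P)=\m(P(a_1x_1,\dots,a_nx_n))$ plus $\m(cQ)=\log|c|+\m(Q)$. The momentary detour about the factorization of the second polynomial is harmless, since your final expansion $(1+a^2x)(1+ay)(a^2x+ay)+2a^3xy=a\bigl[(1+a^2x)(1+ay)(ax+y)+2a^2xy\bigr]$ is exactly the verification needed.
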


Our method of proof follows several steps. In Section \ref{sec:regulator} we recall the relationship between the Mahler measure of polynomials associated to elliptic curves and the regulator. 
Then, in Section \ref{sec:arbitrary-torus} we analyze what happens to the regulator integral when the integration domain is changed according to our new definition. We start working with our particular 
examples in Section \ref{sec:regulators-examples}, where we establish the relationship between the regulators of these two families. After that, it remains to discuss the integration paths and to characterize them as elements in the homology,
which is done in Sections \ref{sec:path} and \ref{sec:cycle}. Section \ref{sec:arg} deals with a technical residue computation. The proof of our result is completed in Section \ref{sec:proof}.

\section{The connection between Mahler measure and the regulator}\label{sec:regulator}

In this section we recall the definition of the regulator on the second $K$-group of an elliptic curve $E$ given by Bloch  and Be\u\i linson  
and explain how it can be computed in terms of the elliptic dilogarithm. We then discuss the relationship between Mahler measure and the regulator. 

Let $F$ be a field. Thanks to Matsumoto's theorem, the second $K$-group of $F$ can be described as
\[K_2(F) \cong \Lambda^2 F^\times/\{x\otimes (1-x): x \in F, x\not = 0,1\}.\]

\begin{defn}\label{tempered} Let $P(x,y)\in \C[x^\pm,y^\pm]$ be a two variable polynomial. The Newton polygon $\Delta(P)\subset \R^2$ is given by
\[\mbox{the convex hull of }\{(m,n)\in \Z^2\, |\, \mbox{ the coefficient of }x^my^n \mbox{ of } P \mbox{ is non-zero.}\}\]
For each side of $\Delta$, one can associate a one-variable polynomial whose coefficients are the coefficients of $P$ associated to the points that lie on that side. 

$P$ is said to be tempered if the Mahler measures of its side polynomials are zero. (See Section III of \cite{RV} for more details on these definitions.)
\end{defn}

Let $E/\Q$ be an elliptic curve given by an equation $P(x,y)=0$. 
Rodriguez-Villegas \cite{RV} associates the condition that the polynomial $P$ is tempered to the conditions that the 
tame symbols in $K$-theory are trivial. In that case we can think of  $K_2(E)\otimes \Q \subset K_2(\Q(E)) \otimes \Q$.

Let $x,y \in \Q(E)$. We will work with the differential form 
\begin{equation}\label{eq:eta}
\eta(x,y) := \log |x| d \arg y - \log|y| d \arg x,
\end{equation}
where $d \arg x$ is defined by $\im(dx/x)$. 

The Bloch--Wigner dilogarithm is given by 
\begin{equation}\label{Bloch-Wigner}
D(x)= \im(\Li_2(x))+\arg(1-x) \log|x|,
\end{equation}
where
\[\Li_2(x)=-\int_0^x\frac{\log(1-z)}{z}dz.\]

The form $\eta(x,y)$ is multiplicative, antisymmetric, and satisfies
\[\eta(x,1-x) = d D(x).\]

We are now ready to define the regulator. 

\begin{defn}The regulator map of Bloch \cite{Bloch} and Be\u\i linson \cite{Beilinson} is given by
\begin{eqnarray*}
r_E:K_2(E)\otimes \Q &\rightarrow& H^1(E,\mathbb{R})\\
\{x,y\}&\rightarrow &\left\{ [\gamma] \rightarrow \int_\gamma \eta(x,y)\right \}.
 \end{eqnarray*}
\end{defn} 
In the above definition, we take $[\gamma] \in H_1(E,\mathbb{Z})$ and interpret $H^1(E,\mathbb{R})$ as the dual of $H_1(E,\mathbb{Z})$. 

We remark that the regulator is actually defined over $K_2(\mathcal{E})$, where $\mathcal{E}$ is the N\'eron model of the elliptic curve. $K_2(\mathcal{E})\otimes \Q$ is a subgroup of $K_2(E)\otimes \Q$ determined by finitely many extra conditions as described in \cite{BG}. 

\begin{rem}\label{remark}
Due to the action of complex conjugation on $\eta$, the regulator map is trivial for the classes that remain invariant by complex conjugation, denoted by  $H_1(E,\mathbb{Z})^+$. It therefore suffices to consider the regulator as a function on $H_1(E,\mathbb{Z})^-$.
\end{rem}

We proceed to discuss the integral of $\eta(x,y)$. Since $E/\Q$ is an elliptic curve, we can write
\begin{equation}
\label{eq:diagrammaps}\begin{array}{ccccc}
E(\C)& \stackrel{\sim}{\rightarrow} &\C/(\Z+\tau\Z) &\stackrel{\sim}{\rightarrow}&\C^\times /q^\Z\\ \\
P=(\wp(u),\wp'(u))& \rightarrow & u \bmod \Lambda & \rightarrow & z=e^{2\pi i u},
\end{array}
\end{equation}
where $\wp$ is the Weierstrass function, $\Lambda$ is the lattice $\Z+\tau\Z$, $\tau \in \HH$, and $q=e^{2 \pi i \tau}$.

The next definition is due to Bloch \cite{Bloch}.
\begin{defn}
The elliptic dilogarithm is a function on $E(\C)$ given for $P\in E(\C)$ corresponding to $z \in \C^\times/q^\Z$ by 
\begin{equation}\label{eq:elldilogdef}
D^E(P):=\sum_{n \in \Z} D(q^nz),
\end{equation}
where $D$ is the Bloch--Wigner dilogarithm defined by \eqref{Bloch-Wigner}.
\end{defn}

Let $\Z[E(\C)]$ be the group of divisors on $E$ and let
\[\Z[E(\C)]^-\cong \Z[E(\C)]/\{ (P)+(-P): P\in E(\C)\}.\]

Let $x,y \in \C(E)^\times$. We define a diamond operation by 
\begin{eqnarray*}
\diamond: \Lambda^2 \C(E)^\times &\rightarrow &\Z[E(\C)]^-\\
(x)\diamond (y) &=& \sum_{i,j}m_in_j (S_i-T_j),
\end{eqnarray*}
where \[(x)=\sum_i m_i (S_i) \mbox{ and  }(y)=\sum_j n_j (T_j).\]

With these elements, we have the following result.
\begin{thm}\label{thm:bloch}(Bloch \cite{Bloch}) The elliptic dilogarithm $D^E$ extends by linearity to a map from $\Z[E(\Q)]^-$ to $\C$. Let $x, y \in \Q(E)$ and $\{x,y\}\in K_2(E)$. Then
\[r_E(\{x,y\})[\gamma]=D^E((x)\diamond(y)),\]
where $[\gamma]$ is a generator of $H_1(E,\Z)^-$.
 \end{thm}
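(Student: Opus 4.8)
The plan is to pass to the complex uniformization $E(\C)\cong\C^\times/q^\Z$ of \eqref{eq:diagrammaps}, reduce the identity to a computation with theta functions, and evaluate the regulator integral against a generator of $H_1(E,\Z)^-$ by Fourier expansion. Introduce the Kronecker theta function
\[
\theta(z)=(1-z)\prod_{n\geq1}(1-q^nz)(1-q^nz^{-1}),
\]
holomorphic on $\C^\times$ with simple zeros exactly at $z\in q^\Z$ and satisfying $\theta(qz)=-z^{-1}\theta(z)$. If $x\in\C(E)^\times$ has divisor $(x)=\sum_i m_i(S_i)$ with $S_i$ corresponding to $a_i\in\C^\times/q^\Z$, then, since $\sum_i m_i=0$ and $\prod_i a_i^{m_i}\in q^\Z$, one can write $x=c_x\,z^{j_x}\prod_i\theta(z/a_i)^{m_i}$ for suitable $c_x\in\C^\times$ and $j_x\in\Z$, and similarly $y=c_y\,z^{j_y}\prod_j\theta(z/b_j)^{n_j}$. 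Because $\eta(\cdot,\cdot)$ is bilinear and antisymmetric, integrating over a cycle $\gamma$ avoiding the zeros gives
\[
\int_\gamma\eta(x,y)=\sum_{i,j}m_in_j\int_\gamma\eta\bigl(\theta(z/a_i),\theta(z/b_j)\bigr)+R,
\]
where $R$ collects the finitely many terms involving the monomial and constant factors $z^{j_x},z^{j_y},c_x,c_y$.

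For the core step, put $E$ in a real Weierstrass model so that $q\in(0,1)$; then complex conjugation is $z\mapsto\bar z$, the ``$q$-direction'' cycle lies in $H_1(E,\Z)^+$ and is killed by the regulator (cf.\ Remark \ref{remark}), and a generator $[\gamma]$ of $H_1(E,\Z)^-$ is represented by the circle $|z|=1$ traversed once. One then shows, for $|a|,|b|\notin q^\Z$,
\[
\int_\gamma\eta\bigl(\theta(z/a),\theta(z/b)\bigr)=\sum_{n\in\Z}D\!\left(q^n\frac{a}{b}\right)=D^E(S\ominus T),
\]
where $S\leftrightarrow a$, $T\leftrightarrow b$ and $\ominus$ is the group law on $E$ (so $z(S\ominus T)=a/b$; the general case follows by continuity). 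This is proved by inserting the product expansion of $\theta$ to express $\log|\theta(z/a)|$ and $d\arg\theta(z/a)$ as convergent series on $|z|=1$, expanding $\eta$ of the two products, integrating term by term over the circle, and evaluating each elementary piece $\oint\log|1-\lambda z|\,d\arg(1-\mu z)$ via the Fourier expansions $-\log|1-w|=\sum_{k\geq1}\re(w^k)/k$ and $\arg(1-w)=-\sum_{k\geq1}\im(w^k)/k$; reorganizing the resulting double sum over the lattice index $n$ produces exactly $\sum_n D(q^na/b)$. Combined with $(x)\diamond(y)=\sum_{i,j}m_in_j(S_i\ominus T_j)$ and the linearity of $D^E$, the main sum contributes $D^E((x)\diamond(y))$.

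It remains to show $R=0$ and that the prescription descends to $K_2(E)$. The terms in $R$ have the shape $j_x\int_\gamma\eta(z,y)$, $\log|c_x|\int_\gamma d\arg y$, $\eta(z^{j_x},c_y)$, and so on; the hypothesis $\{x,y\}\in K_2(E)$, that is, the vanishing of every tame symbol of $\{x,y\}$, is precisely the collection of relations among $a_i,b_j,c_x,c_y,j_x,j_y$ that makes these contributions cancel. An equivalent and cleaner way to organize the descent is to verify that both sides annihilate Steinberg symbols: $\int_\gamma\eta(f,1-f)=\int_\gamma dD(f)=0$ since $D$ is single-valued and $\gamma$ is a cycle, while $D^E\bigl((f)\diamond(1-f)\bigr)=0$ follows from the five-term functional equation for $D$, which $D^E$ inherits termwise. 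Since $\eta$ and $D^E\circ\diamond$ are both bilinear on $\Lambda^2\C(E)^\times$ and agree on the building blocks $\{\theta(z/a),\theta(z/b)\}$ up to the Steinberg-type indeterminacy both kill, they agree on $K_2(E)\otimes\Q$; the stated extension of $D^E$ to $\Z[E(\Q)]^-$ is the linear one implicit above.

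The main obstacle is the core elementary computation: justifying the interchange of summation and integration in the doubly indexed theta expansion (absolute convergence on $|z|=1$ away from the zeros of the integrand), correctly tracking signs and the orientation of $\gamma$, and matching the resulting series on the nose with the Fourier series of the Bloch--Wigner dilogarithm so that the lattice sum $\sum_n D(q^na/b)$ emerges. A secondary point requiring care is the vanishing of $R$: checking that the monomial and constant contributions really do cancel under the tame-symbol conditions is exactly where the distinction between $K_2(E)$ and $K_2(\C(E))$ enters.
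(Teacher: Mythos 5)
The paper does not prove Theorem \ref{thm:bloch} at all: it is quoted as a known result of Bloch \cite{Bloch} (see also the exposition in \cite{RV}) and used as a black box, so there is no internal argument to compare yours against. What you propose is a reconstruction of the classical proof from the cited source, and it follows that route faithfully: Tate uniformization $E(\C)\cong\C^\times/q^\Z$ as in \eqref{eq:diagrammaps}, factorization of $x$ and $y$ into translates of the theta function, representation of a generator of $H_1(E,\Z)^-$ by the circle $|z|=1$ for real $q$ (consistent with Remark \ref{remark}), and a Fourier-series evaluation of the circle integral producing the lattice sum $\sum_{n\in\Z}D(q^n a/b)$, i.e.\ the elliptic dilogarithm of $(x)\diamond(y)$.

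As a proof, however, two steps are asserted rather than established. First, the core identity $\int_\gamma\eta\bigl(\theta(z/a),\theta(z/b)\bigr)=\sum_{n\in\Z}D(q^na/b)$ is the entire analytic content of the theorem, and you only describe the computation (the convergence, orientation and normalization bookkeeping is exactly where the constant in the statement comes from), so it would have to be carried out. Second, and more seriously, the disposal of the remainder $R$ is a genuine gap: you claim that vanishing of the tame symbols is ``precisely'' the set of relations among $c_x,c_y,j_x,j_y$ and the divisor points that makes the monomial and constant contributions cancel, but you never exhibit those relations or the cancellation; and the proposed ``cleaner'' Steinberg argument does not repair this, because knowing that both $\int_\gamma\eta(\cdot,\cdot)$ and $D^E\circ\diamond$ kill symbols $\{f,1-f\}$ only yields agreement on symbols built from the theta factors modulo Steinberg relations, whereas a general element of $K_2(E)$ presents itself with the extra $z$-power and constant factors that still must be handled by hand --- this is exactly the part that occupies the proof in \cite{Bloch}. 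A minor additional point: you cannot always ``put'' the curve so that $q\in(0,1)$; when the discriminant is negative one has $q\in(-1,0)$, and the description of $H_1(E,\Z)^\pm$ and of the cycle $|z|=1$ needs a corresponding adjustment.
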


 Deninger \cite{Deninger} was the first to write a formula of the form
\begin{equation}\label{eq:mregulator}
 \m(P) = \frac{1}{2\pi}r(\{x,y\})[\gamma].
 \end{equation}
Rodriguez-Villegas \cite{RV} made a thorough study of the properties of $\eta(x,y)$, defining the notion of tempered polynomial to characterize those polynomials that fit in the above picture.
He also combined the above expression with Theorem \ref{thm:bloch} to prove an identity between two Mahler measures (originally conjectured in Boyd \cite{Bo98})
in \cite{RV2}.

We will now discuss how to reach formula \eqref{eq:mregulator} in a concrete two-variable polynomial.
Let $P(x,y) \in \C[x,y]$ be a polynomial of degree 2 on $y$. 
We may then write 
\[P(x,y)=P^*(x)(y-y_1(x))(y-y_2(x)),\]
where $y_1(x), y_2(x)$ are algebraic functions. 

By applying Jensen's formula with respect to the variable $y$, we have
\begin{align*}
\m(P)-\m(P^*)=&\frac{1}{(2 \pi i)^2} \int_{\TT^2}\log |P(x,y)|\frac{dx}{x}\frac{dy}{y}-\m(P^*)\\
=& \frac{1}{(2 \pi i)^2} \int_{\TT^2}(\log |y-y_1(x)|+\log|y-y_2(x)|)\frac{dx}{x}\frac{dy}{y}\\
=& \frac{1}{2\pi i} \int_{|x|=1,|y_1(x)|\geq 1} \log|y_1(x)|\frac{dx}{x}+\frac{1}{2\pi i} \int_{|x|=1,|y_2(x)|\geq 1} \log|y_2(x)|\frac{dx}{x}.
\end{align*}
Recalling formula \eqref{eq:eta} for $\eta(x,y)$, we have,
\begin{align*}
\m(P)-\m(P^*) =& -\frac{1}{2 \pi} \int_{|x|=1,|y_1(x)|\geq 1} \eta(x,y_1)-\frac{1}{2 \pi} \int_{|x|=1,|y_2(x)|\geq 1} \eta(x,y_2).
\end{align*}

Sometimes we will encounter the case that one of the roots $y_1(x)$ has always absolute value greater than or equal to 1 as $|x|=1$ 
and the other root has always absolute value smaller than or equal to 1 as $|x|=1$. This will allow us to write the right-hand side as a single term,
an integral over a closed path. 

When $P$ corresponds to an elliptic curve and when the set $\{|x|=1,|y_i(x)|\geq 1\}$ can be seen as a cycle in $H_1(E,\Z)^-$, then we may be able to recover a formula of the type \eqref{eq:mregulator}. This has to be examined on a case by case basis.

\section{The initial analysis with an arbitrary torus} \label{sec:arbitrary-torus}

When working with an arbitrary torus, we can still do a similar analysis to the one in the previous section. 
We continue with the notation that $P(x,y) \in \C[x,y]$ is a polynomial of degree 2 on $y$. 

Let $x=a\tilde{x}$ and $y=b\tilde{y}$. We have
\begin{align*}
\m_{a,b}(P)-\m_{a,b}(P^*)=&\frac{1}{(2\pi i)^2} \int_{|x|=a, |y|=b} \log|P(x,y)|\frac{dx}{x}\frac{dy}{y}-\m_{a,b}(P^*)\\
=&\frac{1}{(2\pi i)^2} \int_{|\tilde{x}|=|\tilde{y}|=1} \log|P(a\tilde{x},b\tilde{y})|\frac{d\tilde{x}}{\tilde{x}}\frac{d\tilde{y}}{\tilde{y}}-\m_{a,b}(P^*)\\
=&2\log b-\frac{1}{2\pi} \int_{|\tilde{x}|=1, |\tilde{y}_1|\geq 1} \eta(\tilde{x},\tilde{y}_1)-\frac{1}{2\pi} \int_{|\tilde{x}|=1, |\tilde{y}_2|\geq 1} \eta(\tilde{x},\tilde{y}_2)\\
=&2\log b -\frac{1}{2\pi} \int_{|x|=a, |y_1|\geq b} \eta(x/a,y_1/b)-\frac{1}{2\pi} \int_{|x|=a, |y_2|\geq b} \eta(x/a,y_2/b).
\end{align*}
Now, each of the terms above can be further simplified in the following way. 
\begin{align*}
-\frac{1}{2\pi} \int_{|x|=a, |y_i|\geq b} \eta(x/a,y_i/b)=&-\frac{1}{2\pi} \int_{|x|=a, |y_i|\geq b} (\eta(x,y_i)-\eta(a,y_i)-\eta(x,b))\\
=&-\frac{1}{2\pi} \int_{|x|=a, |y_i|\geq b} (\eta(x,y_i)-\log (a)  d \arg y_i)-\log b
\end{align*}

In order to evaluate 
\begin{equation}\label{eq:regulator}
-\frac{1}{2\pi} \int_{|x|=a, |y_i|\geq b} \eta(x,y_i)
\end{equation}
subject to the condition $P(x,y_i)=0$, where $P(x,y_i)$ is a tempered polynomial,  we reduce to the classical case. 
In favorable cases, $\{|x|=a, |y_i|\geq b\}$ leads to a closed path which can be characterized as an element of 
 $H_1(E,\Z)^-$. 

The term 
\begin{equation}\label{eq:arg}
\log a \frac{1}{2\pi} \int_{|x|=a, |y_i|\geq b} d \arg y_i
\end{equation}
must also be considered for each case. If, as above, $\{|x|=a, |y_i|\geq b\}$ leads to a closed path, then this leads to a term that 
equals a multiple of $\log a$. 

Now that we have described the general situation, we will concentrate in the particular polynomials 
involved in Theorem \ref{bigthm}. 

\section{The connection between the regulator and the $L$-function for our polynomials} \label{sec:regulators-examples}

In this section we prove a relationship between regulators for $R_{-2}(x,y)$ and $S_{2,-1}(X,Y)$. 
The goal of this step is to relate the differential forms in the integral \eqref{eq:regulator}. This will allow us to use formula
\eqref{eq:Rogers-Zudilin} in order to evaluate those terms. A substantial part of what we present in this section 
was done by Touafek \cite{Touafek, Touafek-thesis}. We include it here for the sake of completeness. 

To make the notation easier to follow,
we write the variables of $S_{2,-1}$ with capital letters. 

The change of variables
\[\left\{\begin{array}{ll}
 X&=\displaystyle\frac{(\alpha+1)(x+y)}{x+y-\alpha}\\ \\
 Y&=\displaystyle\frac{(\alpha+1) ((\alpha-2)x-(\alpha+2)y)}{2(x+y-\alpha)}
\end{array}\right .\]
and
\begin{equation}\label{eq:change}
\left\{\begin{array}{ll}
x&=\displaystyle\frac{(\alpha+2) X+2Y}{2(X-(\alpha+1))}\\ \\
y&=\displaystyle\frac{(\alpha-2) X-2Y}{2(X-(\alpha+1))}
\end{array}\right .
\end{equation}
gives a birational transformation 
\begin{equation}\label{eq:varphi}
\varphi: R_\alpha(x,y) \rightarrow E_\alpha(X,Y)
\end{equation}
between
\[R_\alpha(x,y):=(1+x)(1+y)(x+y)-\alpha xy\]
and
\[E_\alpha(X,Y):=Y^2+2XY-\left(X^3 +\left(\frac{\alpha^2}{4} - \alpha - 3\right)X^2 +(\alpha+1)X\right).\]
Remark that this last equation yields $S_{2,-1}(X,Y)$ when $\alpha=-2$. We may continue this computation for arbitrary $\alpha$. 

The torsion group of $E_\alpha$ has order 6 and is generated by $P=\left(\alpha+1,\frac{(\alpha-2)(\alpha+1)}{2}\right)$, with $2P=\left(1,\frac{\alpha-2}{2}\right)$, $3P=(0,0)$, $4P=\left(1,-\frac{\alpha+2}{2}\right)$, $5P=\left(\alpha+1,-\frac{(\alpha+2)(\alpha+1)}{2}\right)$.

\begin{prop}\label{prop} We have the following relation in $\Z[E_{-2}(\Q)]^-$:
\[(X)\circ (Y)= -\frac{2}{3} (x\circ \varphi^{-1})\diamond (y\circ \varphi^{-1})\]
\end{prop}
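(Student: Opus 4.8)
The plan is to compute both divisors explicitly on $E_{-2}$ and compare them in $\Z[E_{-2}(\Q)]^-$. First I would work out the divisor of the coordinate functions $X$ and $Y$ on $E_{-2}$. Since $E_\alpha$ has a rational 2-torsion structure with $3P=(0,0)$ of order $2$ (here I take $\alpha=-2$, so $P=(-1,2)$, $2P=(1,-2)$, $3P=(0,0)$, $4P=(1,0)$, $5P=(-1,0)$), the function $X$ is a degree-$2$ map to $\PP^1$ whose divisor is $(X) = (Q_0)+(-Q_0) - 2(O)$ for $Q_0$ a point with $X=0$, i.e. $(X) = (3P) + (3P) - 2(O) = 2(3P) - 2(O)$ using that $3P = -3P$. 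Similarly $Y$ vanishes at the three $2$-torsion points relative to the group law on $E_{-2}$ written in the given form (one checks directly which of $(0,0),(1,0),(-1,0)$ have $Y=0$: these are $3P$, $4P$, $5P$), and has a triple pole at $O$, so $(Y) = (3P)+(4P)+(5P) - 3(O)$. Then I would compute $(X)\circ(Y)$ — note the statement uses $\circ$ for what I expect is the same diamond operation $\diamond$ restricted so that the contribution of $O$ drops (divisors of degree $0$), giving a $\Z$-linear combination of differences of torsion points, which collapses in $\Z[E_{-2}(\Q)]^-$ modulo $(P)+(-P)$ and using $6P=O$.

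Next I would handle the right-hand side. By \eqref{eq:varphi} the functions $x\circ\varphi^{-1}$ and $y\circ\varphi^{-1}$ are the rational functions on $E_{-2}$ given by \eqref{eq:change} with $\alpha=-2$, namely $x = \frac{2Y}{2(X+1)} = \frac{Y}{X+1}$ and $y = \frac{-4X - 2Y}{2(X+1)} = \frac{-2X-Y}{X+1}$. I would compute the divisors of these two functions on $E_{-2}$: each is a degree-$2$ or degree-$3$ map, and its zeros and poles are among the torsion points (the transformation $\varphi$ sends the zero/pole loci of $R_{-2}$'s coordinate functions, which are lines, to torsion points, by the explicit torsion description given just before the Proposition). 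Having both divisors as explicit $\Z$-combinations of torsion points, I would form the diamond product $(x\circ\varphi^{-1})\diamond(y\circ\varphi^{-1})$, reduce it in $\Z[E_{-2}(\Q)]^-$, and check that $(X)\circ(Y) = -\tfrac23\,(x\circ\varphi^{-1})\diamond(y\circ\varphi^{-1})$ as elements of this group (which is finite, generated by the images of $P, 2P$, so the identity is a finite check once both sides are in reduced form). The factor $-\tfrac23$ will emerge from the comparison — this is the content of Touafek's computation.

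The main obstacle is bookkeeping the divisors correctly: one must be careful that the group law on $E_{-2}(X,Y)$ in the nonstandard Weierstrass form $Y^2 + 2XY = X^3 - X$ is used consistently when identifying which torsion points are $2$-torsion, when applying $P \mapsto -P$, and when computing $\varphi^{-1}$-pullbacks of the lines $x=0$, $x=-1$, $y=0$, $y=-1$, $x+y=0$ appearing in $R_{-2}(x,y) = (1+x)(1+y)(x+y)+2xy$. In particular $-P$ is not $(X_P, -Y_P)$ for this form; one has $-(X,Y) = (X, -Y-2X)$, which affects the reduction modulo $\{(P)+(-P)\}$. A secondary subtlety is the distinction between $\diamond$ (landing in $\Z[E(\C)]^-$, where degree-$0$ divisors are implicitly used) and the raw product of divisors; since the diamond operation already quotients by $(P)+(-P)$ and is bilinear, the terms involving $(O)$ cancel automatically, but this should be stated cleanly. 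Once the divisors are pinned down, the rest is linear algebra in a finite group and the $-\tfrac23$ falls out.
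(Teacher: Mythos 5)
Your plan is essentially the paper's proof: compute the divisors of $X$, $Y$, $x\circ\varphi^{-1}=\frac{Y}{X+1}$ and $y\circ\varphi^{-1}=\frac{-2X-Y}{X+1}$ as combinations of the $6$-torsion points $P,\dots,5P$, form the two diamond products, and compare them in $\Z[E_{-2}(\Q)]^-$ (the paper finds $(X)\diamond(Y)=-4(P)-4(2P)$ and $(x\circ\varphi^{-1})\diamond(y\circ\varphi^{-1})=6(P)+6(2P)$, whence the factor $-\tfrac{2}{3}$). One harmless slip: the zeros $3P,4P,5P$ of $Y$ are not ``the three $2$-torsion points'' (only $3P$ is $2$-torsion); they are simply the points of the curve with $Y=0$, and your direct check yields the correct divisor $(Y)=(3P)+(4P)+(5P)-3(O)$ anyway.
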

\begin{proof}
We compute the divisors of some of the rational functions. 
\begin{align*}
(X)&=2(3P)-2O,\\
(Y)&=(3P)+(4P)+(5P)-3O \qquad(\mbox{for }\alpha=-2),\\
((\alpha+2) X+2Y)&=(3P)+(4P)+(5P)-3O,\\
((\alpha -2)X-2Y)&= (3P)+(2P)+(P)-3O,\\
(X-(\alpha+1))&= (P)+(5P)-2O \qquad(\mbox{for }\alpha=-2).
\end{align*}

Combining the above with the change of variables \eqref{eq:change}, we obtain, for $\alpha=-2$,
\begin{align*}
(x\circ \varphi^{-1})&= (3P)+(4P)-(P)-O,\\
(y\circ \varphi^{-1})&= (3P)+(2P)-(5P)-O.
\end{align*}

Finally, the diamond operations of $\{X,Y\}$ and $\{x\circ \varphi^{-1} ,y\circ \varphi^{-1} \}$ yield
\begin{align*}
(X)\diamond (Y)&=-4(P)-4(2P),\\
(x\circ \varphi^{-1})\diamond (y\circ \varphi^{-1})&=6(P)+6(2P),
\end{align*}
and this proves the desired identity. 
\end{proof}

\section{The integration path} \label{sec:path}

The goal of this section is to determine conditions for the integration paths in integrals \eqref{eq:regulator} and 
\eqref{eq:arg} corresponding to $S_{2,-1}(X,Y)$ and  $R_{-2}(x,y)$ to be closed paths. This will allow us to determine their homology class later.

We start with $S_{2,-1}(X,Y)$. By solving the equation
\[Y^2+2XY-X^3+X=0\]
for $Y$, we find the roots
\begin{equation} \label{eq:Y}
Y_\pm = -X\pm \sqrt{X^3+X^2-X}=X(-1\pm \sqrt{X+1-X^{-1}}),
\end{equation}
where we choose the branch of the square-root that is defined over $\C \setminus (-\infty, 0]$ and takes a number of argument $-\pi <\phi <\pi$ to a number of argument $\frac{\phi}{2}$.  

We wish to determine the values of $a>0$ for which the path 
\[|X|=a, \qquad |Y_\pm|\geq a\]
is a closed path. Namely, we seek to determine the $a$'s for which we always have $|Y_\pm|\geq a$ 
as long as $|X|=a$. 

\begin{lem}\label{lemma-path-S}
Let $t=a-a^{-1}$. Then we have the following. 
\begin{itemize}
\item $|X|=a, |Y_-|\geq a$ is a closed path for any $t \in \R$. 

\medskip

\item $|X|=a, |Y_+|\geq a$ is a closed path for any $|t|\geq 3$. 
\medskip

\item $|X|=a, |Y_+|\leq a$ is a closed path for any $|t|\leq 1$. 

\end{itemize}
\end{lem}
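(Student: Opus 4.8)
The plan is to reduce everything to a study of the scalar function $w = X + 1 - X^{-1}$ as $X$ ranges over the circle $|X| = a$, since by \eqref{eq:Y} we have $Y_\pm = X(-1 \pm \sqrt{w})$, hence $|Y_\pm| = a\,|{-1 \pm \sqrt{w}}|$. Thus the condition $|Y_\pm| \geq a$ is exactly $|{-1 \pm \sqrt{w}}| \geq 1$, i.e. $|\sqrt{w} \mp 1| \geq 1$, which depends only on $w$, not on $a$ directly. Writing $X = a\e^{\ii\theta}$, a short computation gives $w = (a - a^{-1})\cos\theta + \ii(a + a^{-1})\sin\theta + \text{(real part correction)}$; more precisely $\re w = (a - a^{-1})\cos\theta + 1$ and $\im w = (a + a^{-1})\sin\theta$. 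So as $\theta$ runs over $[0,2\pi)$, $w$ traces an ellipse centered at $1$ with horizontal semi-axis $|t| = |a - a^{-1}|$ and vertical semi-axis $a + a^{-1} \ge 2 > |t|$; note the two parametrizations for $a$ and $a^{-1}$ give the same ellipse, which is why the answer depends only on $t = a - a^{-1}$.

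The first step is therefore to set up this ellipse $\mathcal{E}_t$ and record that $\sqrt{\cdot}$ is the principal branch, defined off $(-\infty,0]$. I would first check that $\mathcal{E}_t$ never meets $(-\infty, 0]$: since its center is at $1$ and its leftmost point is $1 - |t|$... wait, that can be negative, so one must be slightly careful — but the ellipse only reaches the negative real axis at $\theta$ with $\sin\theta = 0$, i.e. $w = 1 \pm |t|$, both of which could be negative only if $|t| > 1$; actually $w=1-|t|<0$ happens for $|t|>1$. Hmm — so one should instead argue that the branch cut issue is harmless because $\im w = 0$ forces $w = 1 \pm t$ and one treats those boundary points by a limiting argument, or restrict attention to the ranges of $t$ in the statement where the relevant inequality is anyway strict. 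In any case, the second step is: the map $w \mapsto \sqrt{w}$ sends $\mathcal{E}_t$ to some closed curve $\Gamma_t$ in the right half-plane $\re > 0$ (principal square root), and the three bullet conditions become: $\Gamma_t$ lies outside (resp. inside) the unit circle centered at $+1$ (for $Y_-$, since $|{-1-\sqrt w}| = |\sqrt w + 1| \ge 1$ automatically as $\re\sqrt w>0$... let me re-examine: $|\sqrt w+1|\ge 1$ iff $\re\sqrt w \ge -\tfrac12|\sqrt w|^2\cdot\frac{1}{|\sqrt w+1|}$, hmm), and outside/inside the unit circle centered at $-1$... I would organize it as: $|Y_-|\ge a \iff |\sqrt w + 1|\ge 1$, which holds for \emph{all} $w$ with $\re\sqrt w \ge 0$ except possibly near $w=0$, and since $\re\sqrt w>0$ strictly off the cut this gives the first bullet for all $t$; $|Y_+|\ge a \iff |\sqrt w - 1|\ge 1 \iff |\sqrt w|^2 \ge 2\re\sqrt w \iff |w|\ge 2\re\sqrt w$; squaring, $|w|^2 \ge 2(\re w + |w|)$, i.e. $|w|^2 - 2|w| \ge 2\re w$. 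The cleanest route is to express the condition $|\sqrt w \mp 1| \ge 1$ directly in terms of $w$ by squaring twice, obtaining a polynomial inequality in $\re w, \im w$ (equivalently in $\cos\theta$ via the ellipse parametrization), then minimize/maximize over $\theta$.

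The third step is the optimization: substitute $\re w = 1 + t\cos\theta$, $\im w = (a+a^{-1})\sin\theta$ with $(a+a^{-1})^2 = t^2 + 4$, so $|w|^2 = (1+t\cos\theta)^2 + (t^2+4)(1-\cos^2\theta)$, a quadratic in $c := \cos\theta \in [-1,1]$, and likewise for the condition. Then "$|Y_+|\ge a$ for all $\theta$" becomes "a certain quadratic in $c$ is $\ge 0$ on $[-1,1]$", which one solves by checking endpoints $c = \pm 1$ and the vertex; the endpoint values $c = \pm1$ correspond to $w = 1 \pm t$ real (i.e. $X = \pm a$, $Y$ real), and these should produce exactly the threshold $|t| = 3$ for the $Y_+$ case (from $Y_\pm^2 = X^3 + X^2 - X$ at $X = -a$ giving $|t| = 3$ boundary) and $|t| = 1$ for the $Y_+ \le a$ case. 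I expect the \textbf{main obstacle} to be twofold: (i) correctly handling the principal branch of the square root on the portions of the ellipse $\mathcal{E}_t$ that stray into $\re w < 0$ or approach the cut $(-\infty,0]$ — this is where a naive "square the inequality" argument can silently change branch — so one needs either a continuity/connectedness argument showing the path $\{|X|=a,|Y_\pm|\ge a\}$ stays on one sheet, or a direct verification that the curve $\{|X|=a,|Y_+|=a\}$ has no solutions in the claimed $t$-ranges; and (ii) the bookkeeping to show the critical values of the quadratic in $c$ are attained at the endpoints (equivalently, that the worst case is $X$ real), so that the thresholds are exactly $|t| = 3$ and $|t| = 1$ rather than something attained at an interior $\theta$. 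Once those are pinned down, the three bullets follow by reading off the sign of the quadratic.
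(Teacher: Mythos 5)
Your route is essentially the paper's own: parametrize $X=a e^{i\theta}$, observe that $|Y_\pm|\geq a$ depends only on $w=X+1-X^{-1}$, i.e. only on $t=a-a^{-1}$ and $\cos\theta$, and convert $|{-1\pm\sqrt{w}}|\geq 1$ into an algebraic inequality by squaring. Your treatment of the first bullet is correct and clean: $|\sqrt{w}+1|^2=|w|+2\re\sqrt{w}+1\geq 1$ because the principal branch has $\re\sqrt{w}\geq 0$. The branch-cut worry you raise in (i) is real but minor: the ellipse traced by $w$ meets the real axis only at $\theta=0,\pi$, where $w=1\pm t$, and at those isolated points one evaluates directly, getting $|{-1\pm\sqrt{1-|t|}}|=\sqrt{|t|}>1$ when $|t|>1$; this is exactly how the paper disposes of it, so your "direct verification" option works.

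The genuine gap is in the quantitative heart of the $Y_+$ bullets. The condition $|w|^2-2|w|\geq 2\re w$ is \emph{not} a quadratic in $c=\cos\theta$: only $|w|^2=5+t^2+2tc-4c^2$ is polynomial in $c$, while $|w|$ itself is a square root of that quadratic. To eliminate it you must square a second time (legitimate in the direction you need because when $3+t^2-4c^2<0$ the reverse inequality is automatic), and you are then left with the quartic $f(c)=16c^4-8(t^2+1)c^2-8tc+(t^4+2t^2-11)$, whose sign on all of $[-1,1]$ must be controlled. Your plan "check the endpoints and the vertex" is the strategy for a parabola and does not settle this: $f$ has interior critical points in $\left(-\sqrt{(t^2+1)/12},\,\sqrt{(t^2+1)/12}\right)$, and the bulk of the paper's proof is precisely the work you defer as "bookkeeping" — for $|t|\leq 1$ it uses $f''$ to locate the possible maxima, the values $f(1)=(t-3)(t+1)^3$ and $f(-1)=(t+3)(t-1)^3$, and a further bound at the interior local maxima obtained by minimizing $g(t)=\tfrac{2t^2}{3}-|t|\sqrt{(t^2+1)/3}$, giving $f<0$ on $[-1,1]$; for $|t|\geq 3$ it needs the separate estimate $(4c^2-(t^2+1))^2-12-8t c\geq t^4-6t^2-3-8|t|\geq 0$. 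Your endpoint computations at $\theta=0,\pi$ do yield the correct thresholds $|t|=3$ and $|t|=1$, but by themselves they only prove the inequalities at those two values of $\theta$, whereas the lemma asserts them for every point of the circle; and your expectation that "the worst case is $X$ real" is not something you can assume — the interior critical points must be bounded, which is where the real work lies.
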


\begin{proof} Let 
\[\tilde{Y}_\pm = \frac{Y_\pm}{X},\]
and write $X=ae^{i \theta}$ with $-\pi \leq \theta < \pi$. 

Requesting that $|Y_\pm|\geq a$ for $|X|=a$ is equivalent to requesting that for all $-\pi \leq \theta < \pi$
\begin{align*}
|\tilde{Y}_{\pm}| =& |-1 \pm \sqrt{1+ae^{i\theta} - a^{-1}e^{-i\theta}} |\\
=& |-1\pm \sqrt{1+(a-a^{-1})\cos(\theta) +i(a+a^{-1})\sin(\theta)  }|\\
\geq& 1.
\end{align*}

Let
\[R:=|\sqrt{1+(a-a^{-1})\cos(\theta) +i(a+a^{-1})\sin(\theta)}|.\] 

By taking absolute values we can write 
\begin{align*}
R^2 =& \sqrt{( 1+(a-a^{-1})\cos(\theta) )^2 + ((a+a^{-1})\sin(\theta))^2   } \\
=& \sqrt{ 5\left(1+\frac{t^2}{4}\right) - 4\left(\cos(\theta) - \frac{t}{4}\right)^2 }\\
\geq & \sqrt{(|t|-1)^2},
\end{align*}
where the last inequality becomes an equality if $\theta=-\pi, 0$, according to the sign of $t$. Starting from now, we assume that $|t|\not = 1$ so that $R\not =0$. 

Let $\phi$ be such that 
\[\cos(\phi)=\frac{1+(a-a^{-1})\cos(\theta)}{R^2}, \qquad \sin(\phi)=\frac{(a+a^{-1})\sin(\theta)}{R^2}.\]
Notice that $\phi=\pm \pi$ only when $\theta=-\pi$ and $t>1$ or $\theta=0$ and $t<-1$. In these cases
\begin{align*}
|\tilde{Y}_{\pm}|=& |-1\pm\sqrt{1-|t|}|\\
=&\sqrt{1+(|t|-1)}\\
=&\sqrt{|t|}>1.
\end{align*}

Otherwise we have $-\pi <\phi<\pi$ 
and  this allows us to write
\begin{align*}
|\tilde{Y}_{\pm}| = &|-1 \pm R(\cos(\phi/2) +i\sin(\phi/2))|\\
= &\sqrt{1+  R^2 \mp 2R\cos(\phi/2))}\\
= &\sqrt{1+  R^2 \mp 2R\sqrt{\frac{1+\cos(\phi)}{2}}}\\
=&\sqrt{1+  R^2 \mp \sqrt{2(R^2+1+t\cos(\theta))}}.
\end{align*}

Now notice that 
\[|\tilde{Y}_{\pm}| \geq 1 \Longleftrightarrow R^2  \geq \pm\sqrt{2(R^2+1+t\cos(\theta))}.\]
From here, we get that 
\[|\tilde{Y}_{-}|\geq 1\]
for $|t|\not =1$, and this can be extended to the cases $|t|=1$ by continuity. 

We now look for conditions for having $|\tilde{Y}_{+}|\geq 1$. 

\begin{align}
|\tilde{Y}_{+}|\geq 1&\Longleftrightarrow R^2  \geq \sqrt{2(R^2 +1+t\cos(\theta) )} \nonumber \\
&\Longleftrightarrow R^4 \geq 2(R^2 +1+t\cos(\theta) )\nonumber \\
&\Longleftrightarrow 3 + t^2 - 4 \cos(\theta)^2 \geq 2 \sqrt{5 + t^2 - 4 \cos(\theta)^2 + 2t\cos(\theta) \label{ineq}}.
\end{align}
Consider $\theta=0$. Thus, $\cos(\theta)=1$ and we have
\begin{align*}
|\tilde{Y}_{+}|\geq 1&\Longleftrightarrow t^2 - 1  \geq 2 \sqrt{ t^2 + 2 t+ 1}\\
&\Longleftrightarrow  t \geq 3 \mbox{ or } t\leq -1.
\end{align*}
Now consider $\theta = -\pi$ and $\cos(\theta) = -1$. 
\begin{align*}
|\tilde{Y}_{+}|\geq 1&\Longleftrightarrow t^2 - 1  \geq 2 \sqrt{t^2-2t+1}\\
&\Longleftrightarrow  t \leq -3 \mbox{ or } t\geq 1.
\end{align*}
This explains the conditions in the statement of the Lemma. 

We will work now with general $\theta$. We have just seen that, for selective values of $\theta$, 
inequality \eqref{ineq} is true for $|t|\geq 3$, while the opposite inequality is true for $|t|<1$. We want to see that this is the case for any $-\pi \leq \theta <\pi$. 
The  case where the opposite inequality is always true certainly happens when the left-hand side is negative.
Therefore, we at most add more restrictions by squaring both sides. Thus consider 
\begin{align}
& \left(  3 + t^2 - 4 \cos(\theta)^2   \right)^2 -  4 \left(  5 + t^2 - 4 \cos(\theta)^2 + 2t\cos(\theta)   \right) \nonumber\\
=& 16\cos^4(\theta) -8\cos^2(\theta)(t^2+1) -8t\cos(\theta) + \left(t^4 +2t^2-11\right)\label{ineq2} \\
=& (4\cos^2(\theta)-(t^2+1))^2-12 -8t\cos(\theta)\nonumber
\end{align}
and request that this expression is non-positive for $|t|\geq 3$ and non-negative for $|t|\leq 1$.

Let $|t|\geq 3$. Then $t^2+1\geq 10 >4 \cos^2\theta$ and
\begin{align*}
 (4\cos^2(\theta)-(t^2+1))^2-12 -8t\cos(\theta) \geq &(4-(t^2+1))^2-12 -8|t|\\
\geq  & t^4-6t^2-3-8|t|\\
\geq & 0,
\end{align*}
where the last inequality was obtained by inspecting the roots of the polynomial.

Let $|t|\leq 1$. Let us consider \eqref{ineq2} as a polynomial in $x=\cos(\theta)$. 
 \begin{align*}
 f(x) &=      16x^4 -8x^2(t^2+1) -8tx + (t^4+2t^2-11)\\
  f'(x) &=      8\left(8x^3 -2x(t^2+1) -t \right) \\
  f''(x) &=      16\left(12x^2 -(t^2+1) \right)
 \end{align*}

Notice that the roots of $f''(x)$ are at $\pm \sqrt{\frac{t^2+1}{12}}$, and $f''(x)$ is negative in the open interval
between these roots and positive in the open set outside these roots.

Therefore, the maxima of $f(x)$ in $[-1,1]$ occur at either $1$,$-1$ or at points in $\left(-\sqrt{\frac{t^2+1}{12}}, \sqrt{\frac{t^2+1}{12}}\right)$.


Notice that for $|t|<1$,  $f(1)=(t-3)(t+1)^3<0$ and  $f(-1)=(t+3)(t-1)^3<0$. 


Let $x_M$ be a local maximum when $|t|<1$. We have
\begin{align*}
&3 + t^2 - 4 x_M^2  - 2 \sqrt{ 5 + t^2 - 4x_M^2 + 2tx_M  }\\ 
< & 3 + 1  - 2 \sqrt{  5  + t^2 - 4 \frac{t^2+1}{12} - 2|t|\sqrt{\frac{t^2+1}{12}} }\\
= & 4 -2  \sqrt{\frac{14}{3}+  \left( \frac{2t^2}{3} - |t|\sqrt{\frac{t^2+1}{3}}\right) }.
 \end{align*}


Let 
\[g(t):=\frac{2t^2}{3} - |t|\sqrt{\frac{t^2+1}{3}}.\]
Let us minimize $g(t)$ for $-1\leq t \leq 1$. Since $g(t)$ is an even function, we consider $0\leq t \leq 1$. 
We have
\[g'(t) = \frac{4t}{3}-\frac{2t^2+1}{\sqrt{3(t^2+1)}}.\]
We find that
\[g'(t)=0 \Longleftrightarrow t = \frac{1}{\sqrt{2}}.\]

Thus, we find and compare the values
\[g(1)=\frac{2-\sqrt{6}}{3}, \quad g\left(\frac{1}{\sqrt{2}}\right)=-\frac{1}{6}, \quad g(0)=0.\]
We find the minimum for  $0\leq t \leq 1$ at $t=\frac{1}{\sqrt{2}}$  and $g(t)\geq -\frac{1}{6}$ in $|t|\leq 1$. 
Putting this together, for $|t|<1$, we have
\begin{align*}
&3 + t^2 - 4 x_M^2  - 2 \sqrt{ 5 + t^2 - 4x_M^2 + 2tx_M  }\leq   4 -3\sqrt{2} <0,
\end{align*}
which proves that $f(x)< 0$ for $-1< t < 1$.


The cases $|t|=1$ are obtained by continuity. 
\end{proof}

We now proceed with the analysis of the integration paths corresponding to  $R_{-2}(x,y)$. The equation
\[(1+x)(1+y)(x+y)+2xy=0\]
can be rewritten as 
\[(x+1)y^2+(x^2+4x+1)y+(x^2+x)=0.\]
It is convenient to make the change of variables $x= x_1^2$ as well
as $y_1=y/x_1$. We then have
\begin{align}\label{eq:y1}
{y_1}_\pm = &\frac{-(x_1^2+4+x_1^{-2})\pm \sqrt{x_1^4+4x_1^2+10+4x_1^{-2}+x_1^{-4}}}{2(x_1+x_1^{-1})} \nonumber\\
=&\frac{-(2+(x_1+x_1^{-1})^2)\pm \sqrt{4+(x_1+x_1^{-1})^4}}{2(x_1+x_1^{-1})},
\end{align}
where, as before, the square-root is defined over $\C \setminus (-\infty, 0]$ and takes a number of argument $-\pi <\phi <\pi$ to a number of argument $\frac{\phi}{2}$.  

Following the previous notation, we set 
\begin{equation}\label{eq:y}
y_\pm = x_1{y_1}_\pm.
\end{equation}

We wish to determine the values of $a>0$ for which the path 
\[|x|=a^2, \qquad |y_\pm|\geq a\]
is a closed path. After the aforementioned change of variables, this result is equivalent to the path
\[|x_1|=a, \qquad |{y_1}_\pm|\geq 1\]
being closed.
\begin{lem} \label{lemma-path-R}
 Let $\sqrt{\frac{1+\sqrt{5}-\sqrt{2\sqrt{5}+2}}{2}}\leq a\leq \sqrt{\frac{1+\sqrt{5}+\sqrt{2\sqrt{5}+2}}{2}}$. Then 
 \[|x|=a^2, \qquad |y_-|\geq a\]
and
\[|x|=a^2, \qquad |y_+|\leq a\]
are closed paths.
\end{lem}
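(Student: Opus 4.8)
The plan is to run the scheme of Lemma~\ref{lemma-path-S}, but first to exploit a special feature of $R_{-2}$ that makes the two assertions equivalent and reduces everything to a single real inequality on the ellipse traced by $x_1+x_1^{-1}$. After the substitution $x=x_1^2$, $y=x_1y_1$ used for \eqref{eq:y1} we have $|y_\pm|=a|{y_1}_\pm|$ on $|x_1|=a$, so it suffices to treat the paths $|x_1|=a,\ |{y_1}_\pm|\gtrless 1$. The product of the two roots of $(x+1)y^2+(x^2+4x+1)y+(x^2+x)=0$ is $\tfrac{x^2+x}{x+1}=x=x_1^2$, hence ${y_1}_+{y_1}_-=1$; therefore $|{y_1}_-|\ge 1\iff|{y_1}_+|\le 1\iff|{y_1}_-|\ge|{y_1}_+|$, the two asserted paths are the same subset of the curve, and it is enough to prove $|{y_1}_-|\ge|{y_1}_+|$ at every point of $|x_1|=a$. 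Writing $w:=x_1+x_1^{-1}$, so that ${y_1}_\pm=\dfrac{-(2+w^2)\pm\sqrt{4+w^4}}{2w}$ by \eqref{eq:y1}, expanding $|A\pm B|^2$ gives
\[ |{y_1}_-|^2-|{y_1}_+|^2=\frac{\re\!\left((2+w^2)\,\overline{\sqrt{4+w^4}}\,\right)}{|w|^2}. \]
As $x_1=ae^{i\theta}$ runs over $|x_1|=a$, the point $w$ runs over the ellipse $\Gamma_a:\ w=(a+a^{-1})\cos\theta+i(a-a^{-1})\sin\theta$, for which $|w|^2=(a-a^{-1})^2+4\cos^2\theta$. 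Thus the lemma reduces to showing that $\Phi(w):=\re\big((2+w^2)\overline{\sqrt{4+w^4}}\big)\ge 0$ on $\Gamma_a$ when $a$ lies in the stated range.

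Next I would establish the sign of $\Phi$ globally. The function $\Phi$ is continuous on $\Omega:=\C^{\times}\setminus\{w:4+w^4\in(-\infty,0]\}$, which is connected and invariant under $w\mapsto-w$ and $w\mapsto\bar w$ (and $\Phi$ is invariant under both). I claim $\Phi$ vanishes on $\Omega$ exactly on the two open arcs $A_\pm=\{\,|w|=\sqrt2:\ \pm\im w>|\re w|\,\}$: indeed $\Phi(w)=0$ forces $(2+w^2)/\sqrt{4+w^4}\in i\R$, hence $\tfrac{(2+w^2)^2}{4+w^4}=1+\tfrac{4}{\,w^2+4w^{-2}\,}\in(-\infty,0]$, hence $w^2+4w^{-2}$ real and in $[-4,0)$, which (a real $w^2$ gives values outside $[-4,0)$, apart from the degenerate $w^2=-2$) means precisely $|w|^2=2$ and $\re w^2<0$. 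Consequently $\Phi$ has a constant sign on each connected component of $\Omega\setminus(A_+\cup A_-)$. Evaluating at $w=a+a^{-1}>0$ gives $\Phi(a+a^{-1})>0$, and since the circle $|w|=1$ lies in $\Omega\setminus(A_+\cup A_-)$, the component $C_0$ containing the real axis contains all of $\R\setminus\{0\}$, is invariant under $w\mapsto\pm w,\bar w$, and on it $\Phi>0$.

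It then remains to check that $\Gamma_a$ lies in $\overline{C_0}$, i.e.\ that $\Gamma_a$ misses $A_+\cup A_-$ and the branch‑cut rays $\{\arg w\in\{\pm\pi/4,\pm3\pi/4\},\ |w|>\sqrt2\}$; this is an elementary conic computation. The level $|w|^2=2$ is attained on $\Gamma_a$ only when $(a-a^{-1})^2\le 2$, and at such a point $\re w^2=\tfrac12\big(4-2(a-a^{-1})^2-(a-a^{-1})^4\big)$, which is $\ge 0$ — so $w\notin A_\pm$ — exactly when $(a-a^{-1})^2\le\sqrt5-1$; likewise $\Gamma_a$ meets the ray $\arg w=\pi/4$ at a point with $|w|^2=\dfrac{2(a+a^{-1})^2(a-a^{-1})^2}{(a+a^{-1})^2+(a-a^{-1})^2}$, which is $\le 2$ exactly when $(a-a^{-1})^2\le\sqrt5-1$. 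Since $(a-a^{-1})^2=a^2+a^{-2}-2$, the condition $(a-a^{-1})^2\le\sqrt5-1$ is $a^2+a^{-2}\le 1+\sqrt5$, i.e.\ $a^4-(1+\sqrt5)a^2+1\le 0$, whose solution set is precisely $\sqrt{\tfrac{1+\sqrt5-\sqrt{2\sqrt5+2}}{2}}\le a\le\sqrt{\tfrac{1+\sqrt5+\sqrt{2\sqrt5+2}}{2}}$ (the two roots are reciprocal, with product $1$ and sum $1+\sqrt5$).

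Finally I would assemble the conclusion. For $a$ strictly inside the range, $(a-a^{-1})^2<\sqrt5-1$, so $\Gamma_a\subset\Omega$ and $\Gamma_a\cap(A_+\cup A_-)=\emptyset$; being connected, $\Gamma_a\subset C_0$, hence $\Phi>0$ on $\Gamma_a$ and $|{y_1}_-|>1$ at every point of $|x_1|=a$, so the path is closed. (The only $a$ with $w=0$ on $\Gamma_a$ is $a=1$, at $x_1=\pm i$, i.e.\ $x=-1$, where the $y$‑quadratic degenerates and $y_-$ is the point at infinity, still of modulus $\ge a$.) At the two endpoints, $\Gamma_a$ additionally passes through $w\in\{\pm1\pm i\}$, where $4+w^4=0$, so ${y_1}_+={y_1}_-$ and $|{y_1}_-|=1$ there; the four remaining arcs of $\Gamma_a$ still lie in $C_0$ by its symmetry, so $|{y_1}_-|\ge 1$ throughout and the path is closed. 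Transporting back through $x=x_1^2$ and $|y_\pm|=a|{y_1}_\pm|$ yields the two closed paths in the statement. The one genuinely delicate point — and the reason for the topological argument of the second step — is the bookkeeping of the fixed branch of the square root: one must be sure that ``$y_-$'' is consistently the root of larger modulus all the way around the circle, and the constant‑sign analysis of $\Phi$ is exactly what guarantees this; the third step is just the conic computation that pins down the admissible range of $a$.
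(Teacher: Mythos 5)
Your argument is correct, and it reaches the lemma by a genuinely different route than the paper, even though the two proofs share the same skeleton: the substitution $x=x_1^2$, the observation ${y_1}_+{y_1}_-=1$ reducing both assertions to $|{y_1}_-|\geq 1$ on $|x_1|=a$, and ultimately the same threshold $a^2+a^{-2}\leq 1+\sqrt5$, which is exactly the paper's conditions \eqref{eq:aa}--\eqref{supercondition}. The paper proceeds pointwise in $\theta$: it checks $\theta=0,\tfrac{\pi}{2}$ and $a=1$ directly, restricts $a$ so that the discriminant $4+(x_1+x_1^{-1})^4$ avoids the branch cut, and then rules out $|{y_1}_-|<1$ by an intermediate-value/counting argument on the crossing equation $|{y_1}_-|=1$, which it reduces to $4\sin^2\theta=a^2+a^{-2}$. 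You instead work globally in the variable $w=x_1+x_1^{-1}$: the identity $|{y_1}_-|^2-|{y_1}_+|^2=\re\bigl((2+w^2)\overline{\sqrt{4+w^4}}\bigr)/|w|^2$ turns the lemma into a sign statement for $\Phi(w)$ on the ellipse $\Gamma_a$, the zero locus of $\Phi$ is pinned down as the two arcs of $|w|=\sqrt2$ with $\re(w^2)<0$ (your computation of this locus checks out), and constancy of sign on the connected component containing the real axis, plus the two conic computations, does the rest. Note that your circle $|w|^2=2$ is precisely the paper's crossing condition $4\sin^2\theta=a^2+a^{-2}$, and your ray-avoidance condition is the paper's cut-avoidance condition, so the underlying geometry is identical; what your version buys is that the admissible range of $a$ is derived rather than verified, the possible failure locus is localized exactly (not just counted), and the endpoint values of $a$ are handled directly at the corner points $w=\pm1\pm i$ (where ${y_1}_+={y_1}_-=-1$) instead of by continuity. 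You are also right to flag the degenerate point $a=1$, $x_1=\pm i$ (i.e.\ $w=0$, $x=-1$), where $y_-$ passes through infinity; the paper's $a=1$ computation silently divides by $\cos\theta$ there, so your parenthetical remark is if anything more careful. The only cosmetic slip is the phrase that the two asserted paths are ``the same subset of the curve''; they are different lifts of the circle, but, as you then use correctly, the two closedness claims are equivalent to the single inequality $|{y_1}_-|\geq1$, so nothing in the argument is affected.
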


\begin{proof}
Write $x_1=ae^{i\theta}$ with $0\leq \theta <\pi$. For the particular case when $a=1$, 
we have 
\[{y_1}_\pm = \frac{-(1+2\cos^2\theta)\pm\sqrt{1+4\cos^4 \theta}}{2\cos \theta}.\]
Observe that $0<1+4\cos^4 \theta$ and both roots ${y_1}_\pm$ are real. Since $1+4\cos^4 \theta\leq (1+2\cos^2\theta)^2$,
\[|{y_1}_-|=\frac{1+2\cos^2\theta+\sqrt{1+4\cos^4 \theta}}{2|\cos \theta|}\geq \frac{\sqrt{1+4\cos^4 \theta}}{|\cos \theta|}=\sqrt{\frac{1}{\cos^2\theta}+4\cos^2\theta}> 1. \]
On the other hand, ${y_1}_+{y_1}_-=1$ implies that $|{y_1}_+|<1$.

From now on, we assume that $a>0$ and $a\not = 1$. Consider the case $\theta=0$. Then
\begin{align*}
|{y_1}_-| =& \frac{(2+(a+a^{-1})^2)+\sqrt{4+(a+a^{-1})^4}}{2(a+a^{-1})}\\
> & \frac{\sqrt{4+(a+a^{-1})^4}}{2(a+a^{-1})}\\
> & \frac{a+a^{-1}}{2}\\
> & 1.
\end{align*}
Since ${y_1}_+{y_1}_-=1$, this means that our goal is to find conditions on $a$ under which $|{y_1}_-|\geq 1$ for $0\leq \theta <\pi$ and we will automatically obtain $|{y_1}_+|\leq 1$. 

Consider the case of $\theta=\frac{\pi}{2}$. Then
\begin{align*}
|{y_1}_-| =&\left|\frac{(2+(ia-ia^{-1})^2)+\sqrt{4+(ia-ia^{-1})^4}}{2(ia-ia^{-1})}\right|\\
=&\left|\frac{(2-(a-a^{-1})^2)+\sqrt{4+(a-a^{-1})^4}}{2(a-a^{-1})}\right|
\end{align*}

Observe that $|2-(a-a^{-1})^2|\leq \sqrt{4+(a-a^{-1})^4}$. To see this, it suffices to 
take squares in both sides of the inequality. 

Then we can always write
\begin{align*}
|{y_1}_-| = &\frac{(2-(a-a^{-1})^2)+\sqrt{4+(a-a^{-1})^4}}{2|a-a^{-1}|}.
\end{align*}
We want to see under which conditions we have $|{y_1}_-|\geq 1$, which is equivalent to 
\begin{align*}
\sqrt{4+(a-a^{-1})^4} \geq & (a-a^{-1})^2+2|a-a^{-1}|-2.
\end{align*}
The above is always true for $1-\sqrt{3}< a-a^{-1}\leq \sqrt{3}-1$ because the right-hand side is negative. Otherwise, we square both sides and after simplification we obtain 
\begin{equation*}\label{sqrt{2}}
-\sqrt{2}\leq  a-a^{-1} \leq \sqrt{2},
\end{equation*}
or
\begin{equation}\label{sqrt{3}}
\frac{\sqrt{3}-1}{\sqrt{2}}< a <\frac{1+\sqrt{3}}{\sqrt{2}}.
\end{equation}

For general $\theta\not = 0, \frac{\pi}{2}$,  $a>0$ and $a\not =1$, our first step is to find conditions on $a$ so that the argument in the square root is 
never a real non-positive number. Then we consider 
\begin{align*}
\Delta=x_1^4+4x_1^2+10+4x_1^{-2}+x_1^{-4}=& (a^4+a^{-4})\cos(4\theta) +4 (a^2+a^{-2})\cos(2\theta) +10 \\&+ i ((a^4-a^{-4})\sin(4 \theta)+4(a^2-a^{-2})\sin(2 \theta)), 
\end{align*}
and we want to ensure that $\Delta \not \in (-\infty, 0]$.  For $\Delta$ to be a non-positive real number, we need
that the imaginary part be zero, namely,
\[\im(\Delta)=(a^4-a^{-4})\sin(4 \theta)+4(a^2-a^{-2})\sin(2 \theta)=0.\]
Since we work under the assumption that $a \not = 1$ and that $\sin(2\theta)\not =0$, 
we can divide the above identity by $(a^2-a^{-2})\sin(2\theta)$.  Then 
the above is equivalent to 
\begin{equation}\label{eq:cos}
\cos(2\theta)=-\frac{2}{a^2+a^{-2}}.
\end{equation}

Now we consider the real part of $\Delta$ under the above condition, 
\begin{align*}
\re(\Delta)=&  (a^4+a^{-4})\left(\frac{8}{(a^2+a^{-2})^2}-1\right) +2\\
=& -\frac{(a^4-2a^2-2-2a^{-2}+a^{-4})(a^4+2a^2-2+2a^{-2}+a^{-4})}{(a^2+a^{-2})^2}.
\end{align*}
Notice that $a^4+2a^2-2+2a^{-2}+a^{-4}=(a^2-a^{-2})^2+2(a^2+a^{-2})>0$ and similarly $(a^2+a^{-2})^2>0$. 
In order for $\Delta \in \C\setminus (-\infty, 0]$ we therefore need
\begin{equation}\label{eq:aa}
a^4-2a^2-2-2a^{-2}+a^{-4}<0.
\end{equation}
This happens when 
\[\frac{1+\sqrt{5}-\sqrt{2\sqrt{5}+2}}{2}<a^2<\frac{1+\sqrt{5}+\sqrt{2\sqrt{5}+2}}{2}.\]
Since we also assume that $a$ is positive, we will impose the condition
\begin{equation}\label{supercondition}
\sqrt{\frac{1+\sqrt{5}-\sqrt{2\sqrt{5}+2}}{2}}<a<\sqrt{\frac{1+\sqrt{5}+\sqrt{2\sqrt{5}+2}}{2}}.
\end{equation}
We remark that this condition implies condition \eqref{sqrt{3}}.

From now on we will assume \eqref{supercondition} and we will prove that $|{y_1}_-|\geq 1$ under this condition for any $0\leq \theta <\pi$. 

Notice that we proved that $|{y_1}_-|>1$ when $\theta=0, \frac{\pi}{2}$. If for some $\theta$ we have $|{y_1}_-|< 1$, then we must have $|{y_1}_-|=1$ at some intermediate point and we search for this point. 
If $|{y_1}_-|=1$, then we also have $|{y_1}_+|=1$ and
\[\left|2+(x_1+x_1^{-1})^2- \sqrt{4+(x_1+x_1^{-1})^4}\right|=\left|2+(x_1+x_1^{-1})^2+\sqrt{4+(x_1+x_1^{-1})^4}\right|.\]
An elementary computation shows that this can only happen when there is a $C \in \R$ such that
\[2+(x_1+x_1^{-1})^2 = iC\sqrt{4+(x_1+x_1^{-1})^4}.\]
Squaring both sides, we need
\[(2+(x_1+x_1^{-1})^2)^2 = -C^2(4+(x_1+x_1^{-1})^4),\]
or
\begin{equation}\label{eq:K}
(1+C^2)(4+(x_1+x_1^{-1})^4)+4(x_1+x_1^{-1})^2=0.
\end{equation}
Considering the imaginary part, we obtain
\[(1+C^2)((a^4-a^{-4})\sin(4\theta)+4 (a^2-a^{-2})\sin(2\theta))+4(a^2-a^{-2})\sin(2\theta)=0.\]
Notice that $\sin(2\theta)=0$ implies that $\theta=0$ or $\frac{\pi}{2}$. These cases were already discussed.
Similarly with $a=1$. In all other cases we divide the equation above by $2(a^2-a^{-2})\sin(2\theta)$. After some simplification, we obtain
\begin{equation}\label{eq:K2}
(1+C^2)=\frac{-2}{2+(a^2+a^{-2})\cos(2\theta)}.
\end{equation}
Replacing this in equation \eqref{eq:K} and taking the real part, we obtain,
\[(a^4+a^{-4})\cos(4\theta)+4(a^2+a^{-2})\cos(2\theta)+10=2(2+(a^2+a^{-2})\cos(2\theta))^2.\]
After further simplification, 
\[0=(a^4+a^{-4})+4\cos^2(2\theta)+4(a^2+a^{-2})\cos(2\theta)-2,\]
which implies
\[4=(a^2+a^{-2}+2\cos(2\theta))^2.\]
Since $a^2+a^{-2}\geq 2$, the number inside the parenthesis is positive, so we can write 
$2=a^2+a^{-2}+2\cos(2\theta)$ and further
\[4 \sin^2\theta=a^2+a^{-2}.\]
Thus, given $a$ with condition \eqref{supercondition}, we find at most two solutions $0<\theta<\pi$ to the equation above, with one solution in $(0,\frac{\pi}{2})$ and the other in $(\frac{\pi}{2}, \pi)$. Since we already verified that $|{y_1}_-|>1$ for $\theta=0, \frac{\pi}{2}$ and the case $\theta=-\pi$ is similar to the case $\theta=0$,
and in each interval we
have only once that $|{y_1}_-|=1$, we conclude that we never get $|{y_1}_-|<1$. 

Finally, we can extend condition \eqref{supercondition} to the extremes by continuity. 
\end{proof}

\section{The cycle of the integration path}\label{sec:cycle}

Now that we understand when the integration path in \eqref{eq:regulator} is a closed path, we need to 
 understand its class in the homology group $H_1(E,\Z)$. Let $\omega$ be the invariant holomorphic differential over the Weierstrass form determined by $S_{-2,1}(X,Y)=0$. 
 In order to compare and characterize the cycle $[\gamma]$ corresponding to the closed path $\gamma$, we compute $\int_\gamma \omega$.


\begin{lem}\label{lemma1} Let $a \in \R$ be such that  $\frac{\sqrt{5}-1}{2}\leq a \leq \frac{1+\sqrt{5}}{2}$. Then
\[\int_{|X|=a} \omega= - 2i \sqrt{ \frac{\sqrt5-1}{2} }K\left(i\left( \frac{\sqrt5-1}{2} \right)\right),\]
where the integral is performed over the path $|X|=a$, $|Y_-|\geq a$ and $Y_-$ is given by \eqref{eq:Y} and satisfies  $S_{-2,1}(X,Y_-)=0$, and
\[K(k):= \int_{0}^{\frac{\pi}{2}} \frac{d\theta}{\sqrt{1-k^2\sin^2\theta}}\]
is the complete Elliptic Integral of the First Kind. 
\end{lem}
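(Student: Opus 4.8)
The plan is to compute the period integral $\int_{|X|=a}\omega$ directly, exploiting the fact that by Lemma~\ref{lemma-path-S} the locus $\{|X|=a,\ |Y_-|\ge a\}$ is a closed path on $E$ for all $a$ in the stated range (equivalently all $t=a-a^{-1}\in\R$), so the integral is a genuine period and hence independent of $a$ within that range. Since the value is constant in $a$, I would evaluate it at the most convenient point; the limiting/degenerate choices $a=\frac{1+\sqrt5}{2}$ or $a=\frac{\sqrt5-1}{2}$ (i.e. $|t|=1$) are natural candidates, but it is probably cleanest to keep $a$ general, write the invariant differential as $\omega=\dd X/(2Y+2X)=\dd X/(2Y_-+2X)$ on the branch $Y=Y_-$, and substitute $X=ae^{i\theta}$, $\theta\in[-\pi,\pi)$, to get a concrete $\theta$-integral. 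Using \eqref{eq:Y}, $2Y_-+2X=2X(-1+\sqrt{X+1-X^{-1}})+2X=2X\sqrt{X+1-X^{-1}}$, wait— more carefully $2Y_-+2X = 2\sqrt{X^3+X^2-X}$, so $\omega=\dd X/\big(2\sqrt{X^3+X^2-X}\big)$, the standard holomorphic differential of the Weierstrass model.

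Next I would massage $\int_{|X|=a}\dd X/\big(2\sqrt{X^3+X^2-X}\big)$ into Legendre normal form. Writing $X=ae^{i\theta}$ gives $\dd X = iX\,\dd\theta$, so the integrand becomes $\tfrac{i}{2}\dd\theta \cdot X/\sqrt{X^3+X^2-X} = \tfrac{i}{2}\dd\theta/\sqrt{X+1-X^{-1}}$, and $X+1-X^{-1} = 1+(a-a^{-1})\cos\theta + i(a+a^{-1})\sin\theta = 1 + t\cos\theta + i(a+a^{-1})\sin\theta$ with $t=a-a^{-1}$. The integral is a period, so I may deform $a$; taking $a\to1$ (so $t\to0$) is tempting but then the curve degenerates in a way that must be checked against the range hypothesis — actually $a=1$ lies in $[\frac{\sqrt5-1}{2},\frac{1+\sqrt5}{2}]$ and $|t|=0<3$, so $\{|X|=1,|Y_-|\ge1\}$ is still closed. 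At $a=1$ the integrand is $\tfrac{i}{2}\dd\theta/\sqrt{1+2i\sin\theta}$ over $\theta\in[-\pi,\pi)$, hmm, that is $X+1-X^{-1}=1 + 2i\sin\theta$ — this is not obviously in Legendre form, so instead I would work with the algebraic substitution directly: the roots of $X^3+X^2-X=X(X^2+X-1)$ are $0$ and $\frac{-1\pm\sqrt5}{2}$, so the branch points are $e_1=0$, $e_2=\frac{-1+\sqrt5}{2}$, $e_3=\frac{-1-\sqrt5}{2}$, and a standard Legendre reduction of $\int \dd X/\sqrt{X(X-e_2)(X-e_3)}$ around a cycle enclosing the appropriate pair of branch points yields $\pm\tfrac{2}{\sqrt{e_2-e_3}}K(k)$ with modulus $k^2 = \frac{e_1-e_3}{e_2-e_3}$ or its complement, up to the identification of which cycle $\{|X|=a,|Y_-|\ge a\}$ represents.

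The identification of the cycle is where the real work lies. I would argue that as $X$ runs once around $|X|=a$, the point $(X,Y_-)$ traces a closed loop on $E(\C)$ that encircles exactly one pair of branch points among $\{0,\,\frac{-1+\sqrt5}{2},\,\frac{-1-\sqrt5}{2},\,\infty\}$; since $|t|$ can range over $[0,1)$ within the hypothesis and all these give homologous cycles (the path is closed throughout and varies continuously), it suffices to determine the cycle for one value, e.g. for $a$ slightly less than $\frac{1+\sqrt5}{2}$, where $e_2=\frac{\sqrt5-1}{2}$ is close to $|X|=a$, or for small $a$ where only $e_1=0$ lies inside $|X|=a$ (note $a=\frac{\sqrt5-1}{2}\approx0.618$ and $e_2\approx0.618$, so the branch point $e_2$ sits essentially on the circle at the endpoint — this is exactly why the formula involves $\sqrt{(\sqrt5-1)/2}$ and $k=i(\sqrt5-1)/2$). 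Concretely, I expect that $\{|X|=a\}$ with $a$ in the open interval encloses only $e_1=0$, so the period is over a loop separating $\{0\}$ from $\{e_2,e_3,\infty\}$; computing $\int_{e_3}^{0}$ (or the residue-free period pairing) via $X = e_3 + (e_1-e_3)\sin^2\psi$ or the appropriate Möbius-trigonometric substitution gives $\frac{2}{\sqrt{e_2-e_3}}K\!\left(\sqrt{\tfrac{e_1-e_3}{e_2-e_3}}\right)$; here $e_2-e_3=\sqrt5$ and $e_1-e_3=\frac{1+\sqrt5}{2}$, and a little algebra with the imaginary modulus identity $K(ik')$ should convert $\sqrt{\tfrac{1+\sqrt5}{2\sqrt5}}\,K\big(\sqrt{\tfrac{1+\sqrt5}{2\sqrt5}}\big)$ into $\sqrt{\tfrac{\sqrt5-1}{2}}\,K\big(i\tfrac{\sqrt5-1}{2}\big)$, with the factor $-2i$ coming from the orientation of $|X|=a$ and the $\tfrac{i}{2}$ in $\dd X = iX\,\dd\theta$. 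The main obstacle is unambiguously pinning down which homology class (and with which sign/orientation) the path $\{|X|=a,|Y_-|\ge a\}$ represents; I would handle this by continuity in $a$ together with an explicit winding-number count of how $X$ encircles the branch points $0,\frac{-1\pm\sqrt5}{2}$ as $\theta$ runs over $[-\pi,\pi)$, reducing everything to the well-understood degenerate endpoint $a=\frac{1+\sqrt5}{2}$.
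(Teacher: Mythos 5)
Your overall strategy (the period is constant in $a$ because the closed path varies continuously and the period lattice is discrete; then reduce to Legendre form via the branch points $e_1=0$, $e_2=\tfrac{\sqrt5-1}{2}$, $e_3=-\tfrac{1+\sqrt5}{2}$, $\infty$ and identify the homology class of the lifted circle) is legitimate and genuinely different from the paper, which instead parametrizes $X=ae^{i\theta}$, symmetrizes to a real part, substitutes $t=i\tan(\theta/2)$, deforms the resulting contour onto the real axis (with a small-arc estimate at the branch points), and then quotes Byrd--Friedman 256.00 plus the imaginary-modulus transformation. But your sketch goes wrong exactly at the step you yourself flag as ``where the real work lies.'' For $a$ in the open interval $\left(\tfrac{\sqrt5-1}{2},\tfrac{1+\sqrt5}{2}\right)$ the circle $|X|=a$ encloses \emph{two} finite branch points, namely $0$ and $e_2=\tfrac{\sqrt5-1}{2}$ (since $a>e_2$ and $a<|e_3|$), not ``only $e_1=0$'' as you assert; indeed a loop in the $X$-plane winding once around a single branch point of the double cover cannot lift to a closed path at all, so your proposed identification contradicts the closedness you invoked from Lemma~\ref{lemma-path-S}. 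The correct class is (up to sign) the cycle around the cut $[0,e_2]$, on which the cubic $X^3+X^2-X$ is negative; its period is purely imaginary and the standard reduction gives
\begin{equation*}
\pm\, i\int_{0}^{e_2}\frac{dX}{\sqrt{\,|X(X-e_2)(X-e_3)|\,}}
=\pm\,\frac{2i}{\sqrt{e_2-e_3}}\,K\!\left(\sqrt{\tfrac{e_2-e_1}{e_2-e_3}}\right)
=\pm\,\frac{2i}{\sqrt[4]{5}}\,K\!\left(\sqrt{\tfrac{\sqrt5-1}{2\sqrt5}}\right),
\end{equation*}
which is the paper's value before the imaginary-modulus identity.

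By contrast, the quantity you propose to compute, $\int_{e_3}^{0}$, is the \emph{real} period, with the complementary modulus $\sqrt{\tfrac{1+\sqrt5}{2\sqrt5}}$; no amount of ``algebra with the imaginary modulus identity'' converts $K(k')$ into $K(k)$ --- these are the two independent periods of the curve, and the target value is purely imaginary while your integral is real. So as written the evaluation would produce the wrong number, and the fix is precisely to correct the cycle identification (count the branch points inside $|X|=a$, note the lifted circle is homologous to once around the cut $[0,e_2]$, and track orientation --- note also your sign slip $2(Y_-+X)=-2\sqrt{X^3+X^2-X}$, not $+2\sqrt{\cdot}$, which is where the overall sign in $-2i\sqrt{\tfrac{\sqrt5-1}{2}}K\!\left(i\tfrac{\sqrt5-1}{2}\right)$ comes from). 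With that correction your route is a clean alternative to the paper's contour computation.
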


\begin{proof}
First we assume that $\frac{\sqrt{5}-1}{2}< a < \frac{1+\sqrt{5}}{2}$. The extreme cases follow by continuity.  
By working with the Weierstrass form $S_{-2,1}(X,Y)$, we obtain the following.
\begin{align*}
 \int_{|X|=a} \omega =& \int_{|X|=a} \frac{d X}{2(Y_-+X)}\\
=& - \int_{|X|=a} \frac{d X}{2X\sqrt{X+1-X^{-1}}}\\
=& - \int_{-\pi}^{\pi} \frac{id\theta}{2\sqrt{ae^{i\theta}+1-a^{-1}e^{-i\theta}}}\\
=& - \int_{-\pi}^{\pi} \frac{id\theta}{2\sqrt{1+(a-a^{-1})\cos \theta +i(a+a^{-1})\sin \theta}}\\
=& - \int_{-\pi}^{0} \frac{id\theta}{2\sqrt{1+(a-a^{-1})\cos \theta +i(a+a^{-1})\sin \theta}}-\int_{0}^{\pi} \frac{id\theta}{2\sqrt{1+(a-a^{-1})\cos \theta +i(a+a^{-1})\sin \theta}}\\
=& - \int_{0}^{\pi} \frac{id\theta}{2\sqrt{1+(a-a^{-1})\cos \theta -i(a+a^{-1})\sin \theta}}-\int_{0}^{\pi} \frac{id\theta}{2\sqrt{1+(a-a^{-1})\cos \theta +i(a+a^{-1})\sin \theta}}\\
=& - i \re \int_{0}^{\pi} \frac{d\theta}{\sqrt{1+(a-a^{-1})\cos \theta +i(a+a^{-1})\sin \theta}}
\end{align*}
Our goal is to express the above integral in terms of elliptic integrals that we can easily characterize. In order to do this, 
we work with $\theta/2$ instead of $\theta$. 
\begin{align*}
&\int_{|X|=a} \omega\\
=& - i \re \int_{0}^{\pi} \frac{d\theta}{\sqrt{1+(a-a^{-1})(\cos^2 (\theta/2)-\sin^2 (\theta/2)) +i(a+a^{-1})2 \sin (\theta/2) \cos(\theta/2)}}\\
=& - i \re \int_{0}^{\pi} \frac{d\theta}{\cos (\theta/2)\sqrt{\frac{1}{\cos^2 (\theta/2)}+(a-a^{-1})\left(1-\tan^2 (\theta/2)\right) +i(a+a^{-1})2 \tan (\theta/2)}}\\
=& - i \re \int_{0}^{\pi} \frac{d\theta}{\cos (\theta/2)\sqrt{\tan^2 (\theta/2)+1+(a-a^{-1})\left(1-\tan^2 (\theta/2)\right) +i(a+a^{-1})2 \tan (\theta/2)}}\\
=& - i \re \int_{0}^{\pi} \frac{d\theta}{\cos (\theta/2)\sqrt{(1-a+a^{-1})\tan^2 (\theta/2)+1+(a-a^{-1}) +i(a+a^{-1})2 \tan (\theta/2)}}
\end{align*}
We make the change of variables $t=i\tan (\theta/2)$. This gives \[dt = \frac{id \theta}{2\cos^2 (\theta/2)} = \frac{i}{2}\left(1+\tan^2(\theta/2)\right) d\theta=\frac{1-t^2}{2} id\theta.\] 
Therefore, we have
\begin{align}
 \int_{|X|=a} \omega =&-2 i \re  \int_{0}^{i\infty} \frac{-idt }{\sqrt{(1-t^2)((1-a+a^{-1})(-t^2)+1+(a-a^{-1}) +(a+a^{-1})2 t)}}\nonumber\\
 = & - 2i  \im  \int_{0}^{i\infty} \frac{dt }{\sqrt{(1-a+a^{-1})(t^2-1)\left(t-\frac{\sqrt5+a+a^{-1}}{1-a+a^{-1}}\right) \left(t-\frac{-\sqrt5+a+a^{-1}}{1-a+a^{-1}}\right) }}\nonumber \\
 = & - 2i  \im  \lim_{R\rightarrow \infty}\int_{0}^{iR} \frac{dt }{\sqrt{(1-a+a^{-1})(t^2-1)\left(t-\frac{\sqrt5+a+a^{-1}}{1-a+a^{-1}}\right) \left(t-\frac{-\sqrt5+a+a^{-1}}{1-a+a^{-1}}\right) }}\label{integral}.
 \end{align} 
Since   $\frac{\sqrt{5}-1}{2}< a < \frac{1+\sqrt{5}}{2}$, we have $1-a+a^{-1}>0$. Notice that the polynomial inside the 
square-root has 4 real roots, 
\begin{equation}\label{eq:alpha}\alpha_1=\frac{\sqrt5 +a+a^{-1}}{1-a+a^{-1}},\, \alpha_2=1,\, \alpha_3=\frac{-\sqrt5 +a+a^{-1}}{1-a+a^{-1}},\, \alpha_4=-1,\end{equation} which satisfy
\[\alpha_1 >\alpha_2>0>\alpha_3 >\alpha_4.\]

In order to compute integral \eqref{integral}, we complete the vertical line with a horizontal line and a quarter of a circle of radius $R$. The integrand has no
poles in the interior of this region, but it has two poles on the segment $0\leq t \leq R$ when $R$ is sufficiently large. 
Thus, we modify the integration path by substracting a semicircle of radius  $\varepsilon$ around each pole.

Before proceeding any further,  we need the following result. 
  \begin{lem}\label{littlearc}
Let $P \in \C[t]$ be such that $P(0)\neq 0$, then \[\lim_{\varepsilon \rightarrow 0}\int_{-\varepsilon}^{\varepsilon} \frac{dt}{\sqrt{tP(t)}} = 0,\] 
where the integral is across a semicircle of radius $\varepsilon$ and center $t=0$. 
\end{lem}

\begin{proof} Since $P(0)\not = 0$, there is a real constant $C$ such that  $|P(t)|\geq C^2$ for $|t|\leq \varepsilon$. 
Setting $t= \varepsilon e^{i\theta}$, the absolute value of the integral is 
\[ \left|\int_{0}^{\pi} \frac{i\varepsilon e^{i\theta}d\theta}{\sqrt{\varepsilon P(\varepsilon e^{i\theta})}}\right|
\leq \frac{\sqrt \varepsilon }{C} \int_0^{\pi} d\theta = \frac{\pi \sqrt \varepsilon }{C}.\]

Since the radius $\varepsilon$ can be made arbitrarily small (without changing $C$), the limiting integral is 0.
\end{proof}

Using the previous lemma we write, for \[I= \frac{dt }{\sqrt{(t^2-1)\left(t-\frac{\sqrt5+a+a^{-1}}{1-a+a^{-1}}\right) \left(t-\frac{-\sqrt5+a+a^{-1}}{1-a+a^{-1}}\right) }},\]
\begin{align*}
\im \int_{0}^{iR}I 
= &-\im \int_{iR}^R I+\im \int_{0}^{1} I+\im \int_{1}^{\frac{\sqrt5 +a+a^{-1}}{1-a+a^{-1}}} I+\im\int_{\frac{\sqrt5 +a+a^{-1}}{1-a+a^{-1}}}^R I\\
= &-\im \int_{iR}^R I+\im \int_{1}^{\frac{\sqrt5 +a+a^{-1}}{1-a+a^{-1}}} I,
\end{align*}
where we have used that \[\im \int_{0}^{1} I=\im\int_{\frac{\sqrt5 +a+a^{-1}}{1-a+a^{-1}}}^R I=0\]
because the integrand is real in those intervals.

Notice that
\begin{align*}
\left|\int_{iR}^R I\right| = & \left|\int_{\frac{\pi}{2}}^0 \frac{ iRe^{i\theta}d\theta}{\sqrt{((Re^{i\theta})^2-1)\left(Re^{i\theta}-\frac{\sqrt5+a+a^{-1}}{1-a+a^{-1}}\right) \left(Re^{i\theta}-\frac{-\sqrt5+a+a^{-1}}{1-a+a^{-1}}\right) }}\right|\\
\ll \frac{1}{R},
\end{align*}
and therefore \[\int_{iR}^R I \rightarrow 0\quad \mbox{ as } \quad R\rightarrow \infty.\] 

Finally, we get 
\begin{align*}
 \int_{|X|=a} \omega =& -\frac{2 i}{\sqrt{1-a+a^{-1}}} \im \int_{1}^{\frac{\sqrt5 +a+a^{-1}}{1-a+a^{-1}}}  \frac{dt }{\sqrt{(t^2-1)\left(t-\frac{\sqrt5+a+a^{-1}}{1-a+a^{-1}}\right) \left(t-\frac{-\sqrt5+a+a^{-1}}{1-a+a^{-1}}\right) }}.\\
 \end{align*}
We need the following result. 
\begin{prop}[\cite{By}, formula 256.00 page 120] Let $\alpha_1>\alpha_2>\alpha_3>\alpha_4$ be real numbers and $\alpha_1\geq \gamma >\alpha_2$. Then
\begin{align*}
&\int_{\alpha_2}^{\gamma}\frac{dt}{\sqrt{(\alpha_1-t)(t-\alpha_2)(t-\alpha_3)(t-\alpha_4)}}\\ 
= &\frac{2}{\sqrt{(\alpha_1-\alpha_3)(\alpha_2-\alpha_4)}}F\left(\sin^{-1}\left(\sqrt{\frac{(\alpha_1-\alpha_3)(\gamma-\alpha_2)}{(\alpha_1-\alpha_2)(\gamma-\alpha_3)}}\right),\sqrt{\frac{(\alpha_1-\alpha_2)(\alpha_3-\alpha_4)}{(\alpha_1-\alpha_3)(\alpha_2-\alpha_4)}}\right),
\end{align*}
where 
\[F(\phi,k) = \int_{0}^{\phi} \frac{d\theta}{\sqrt{1-k^2\sin^2\theta}}.\]
\end{prop}

Recalling that  $K(k) = F\left(\frac{\pi}{2},k\right)$, and the values of the $\alpha_i$ given by \eqref{eq:alpha}, we have
 \begin{align*}
 \int_{|X|=a} \omega =& - \frac{2i}{\sqrt{(1-a+a^{-1})}} \frac{2}{\sqrt{(\alpha_1-\alpha_3)(\alpha_2-\alpha_4)}}K\left(\sqrt{\frac{(\alpha_1-\alpha_2)(\alpha_3-\alpha_4)}{(\alpha_1-\alpha_3)(\alpha_2-\alpha_4)}} \right) \\ 
 =&- \frac{2i}{\sqrt[4]{5}}K\left(\sqrt{\frac{\sqrt5-1}{2\sqrt5}}\right).
\end{align*}

By formula 160.02 on page 38 from \cite{By},
\[F(\phi,ik) = \frac{1}{\sqrt{1+k^2}} F\left(\sin^{-1}\left(\frac{ \sqrt{1+k^2}}{ \sqrt{1+k^2\sin^2\phi}}\sin\phi \right), \frac{k}{ \sqrt{1+k^2} }\right).\]
In particular, for $\phi=\frac{\pi}{2}$,
\[K(ik) = \frac{1}{\sqrt{1+k^2}} K\left(\frac{k}{ \sqrt{1+k^2} }\right).\]
Substituting $k$ by $ik$,
\[K(-k) = K(k) = \frac{1}{\sqrt{1-k^2}} K\left(\frac{ik}{ \sqrt{1-k^2} }\right).\]
Thus, 
\[- \frac{2i}{\sqrt[4]{5}}K\left(\sqrt{\frac{\sqrt5-1}{2\sqrt5}}\right) = - 2i \sqrt{ \frac{\sqrt5-1}{2} }K\left(i\left( \frac{\sqrt5-1}{2} \right)\right).\]
 
 \end{proof}
 \begin{lem}\label{lemma2} Let $a \in \R$ be such that $\sqrt{\frac{1+\sqrt{5}-\sqrt{2\sqrt{5}+2}}{2}}\leq a \leq \sqrt{\frac{1+\sqrt{5}+\sqrt{2\sqrt{5}+2}}{2}}$. Then 
\[\int_{\varphi_*(|x|=a^2)} \omega= - 2i \sqrt{ \frac{\sqrt5-1}{2} }K\left(i\left( \frac{\sqrt5-1}{2} \right)\right),\]
where the integral is performed over the path $|x|=a^2$, $|y_-|\geq a$,  $y_-$ is given by \eqref{eq:y1} and \eqref{eq:y} and satisfies 
$R_{-2}(x,y_-)=0$, and  $\varphi$ is given by \eqref{eq:varphi}. 
\end{lem}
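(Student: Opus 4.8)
The plan is to transport everything to the Weierstrass curve $E_{-2}$ via $\varphi$ and then to imitate the proof of Lemma~\ref{lemma1}, using throughout that $\int_\gamma\omega$ depends only on the class $[\gamma]\in H_1(E_{-2},\Z)$.

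First I would note that $\varphi$ from \eqref{eq:varphi} is birational and defined over $\Q$, hence induces a biholomorphism $R_{-2}(\C)\cong E_{-2}(\C)$ and an isomorphism $\varphi_*\colon H_1(R_{-2},\Z)\to H_1(E_{-2},\Z)$ commuting with complex conjugation, so that $\varphi_*$ carries the anti-invariant part to the anti-invariant part. By Lemma~\ref{lemma-path-R}, for every $a$ in the stated interval the set $\{|x|=a^2,\ |y_-|\geq a\}$ is a closed path $\gamma_a$ on $R_{-2}(\C)$, and these paths vary continuously with $a$; hence the class $[\varphi_*\gamma_a]\in H_1(E_{-2},\Z)$ is locally constant, and therefore constant on the interval. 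Consequently $\int_{\varphi_*(|x|=a^2)}\omega$ is the same for all admissible $a$, and it is enough to evaluate it at one convenient value. I would take $a=1$, which lies in the interval and for which the proof of Lemma~\ref{lemma-path-R} already records the simplification $x_1+x_1^{-1}=2\cos\theta$ with ${y_1}_-$ real.

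Second, I would pull the differential back explicitly. With $x=x_1^2$, $y=x_1{y_1}_-$, and the change of variables \eqref{eq:change} specialised to $\alpha=-2$ (so that $X=-(x+y)/(x+y+2)$ and $Y=2x/(x+y+2)$), the invariant form $\omega=\frac{dX}{2(Y+X)}$ becomes an explicit $1$-form in $x_1$ along $\gamma_a$. Parametrising $x_1=ae^{i\theta}$, splitting the $\theta$-integral over $[0,\pi]$ against $[-\pi,0]$, and exploiting the $\theta\mapsto-\theta$ symmetry, I would reduce it to $-i$ times the real (or imaginary) part of a single real integral, exactly as in Lemma~\ref{lemma1}. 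A half-angle/Möbius substitution of the type $t=i\tan(\theta/2)$ should turn the radicand into a quartic in $t$ with four real roots; shifting the contour from the imaginary axis onto the real segment between two consecutive roots---the little semicircles around the intervening poles contribute nothing by Lemma~\ref{littlearc}, and the large quarter-circle is $O(1/R)$---and then applying the Byrd--Friedman formula 256.00 together with the imaginary-modulus transformation 160.02 (just as at the end of the proof of Lemma~\ref{lemma1}) should produce $-2i\sqrt{\frac{\sqrt5-1}{2}}K\!\left(i\left(\frac{\sqrt5-1}{2}\right)\right)$, as claimed.

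The step I expect to be the main obstacle is the algebra of this last reduction: identifying the quartic that the substitution produces, checking that its roots are real and correctly ordered, and verifying that the resulting cross-ratio yields exactly the modulus $\sqrt{\frac{\sqrt5-1}{2\sqrt5}}$ (equivalently $k=i\frac{\sqrt5-1}{2}$ after 160.02), i.e.\ that $\varphi_*\gamma_a$ is, up to sign, the same generator of $H_1(E_{-2},\Z)^-$ as the cycle of Lemma~\ref{lemma1}, with the correct orientation. An alternative that sidesteps the elliptic-integral bookkeeping would be to identify $[\varphi_*\gamma_a]$ in $H_1(E_{-2},\Z)^-\cong\Z$ topologically---by tracking the real locus of $R_{-2}$ and some of the torsion points listed before Proposition~\ref{prop} through $\varphi$---and then to quote Lemma~\ref{lemma1}; but pinning down the sign and orientation that way does not look shorter than carrying out the direct computation, which supplies them automatically.
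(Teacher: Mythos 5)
Your strategy is essentially viable, and it differs from the paper's in one genuine respect. The paper keeps $a$ general throughout: it computes $\varphi^*\omega=-\frac{2\,dx_1}{x_1\sqrt{4+(x_1+x_1^{-1})^4}}$, substitutes $t=\frac{(x_1+x_1^{-1})^2}{4}$, and then deforms the resulting arc onto the real segment; the hypothesis on $a$ (condition \eqref{eq:aa}) is used precisely there, to guarantee that the poles of the integrand at $t=\pm\frac{i}{2}$ stay outside the region of deformation, after which the integral collapses to $-\frac{i}{2}\int_0^1 \frac{dt}{\sqrt{t(1-t)(t^2+1/4)}}$ independently of $a$. You instead dispose of the $a$-dependence up front: $\int_{\varphi_*\gamma_a}\omega$ lies in the (discrete) period lattice, the family $\gamma_a$ is continuous in $a$ (continuity of $y_-$ follows from \eqref{supercondition}, which keeps the radicand off $(-\infty,0]$), so the value is constant on the interval and it suffices to evaluate at $a=1$, where the pulled-back integrand is already real (the radicand is $4+16\cos^4\theta>0$) and no contour deformation is needed. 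That is a legitimate and arguably cleaner use of the hypothesis on $a$, which in your argument enters only through Lemma \ref{lemma-path-R} and continuity; the paper's route buys an explicit verification for every admissible $a$ at the cost of the contour argument.

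The weak point is that the final evaluation --- which is the entire content of the lemma --- is left at the level of ``should produce'', and the specific route you sketch would not go through as written. At $a=1$ the natural substitution (e.g.\ $t=\cos^2\theta$, the specialization of the paper's $t=\frac{(x_1+x_1^{-1})^2}{4}$) leads to the quartic $t(1-t)\bigl(t^2+\tfrac14\bigr)$, which has only two real roots and a pair of purely imaginary ones, so Byrd--Friedman 256.00 (which requires four real ordered roots, as in Lemma \ref{lemma1}) does not apply; and $t=i\tan(\theta/2)$ applied literally to $\frac{d\theta}{\sqrt{1+4\cos^4\theta}}$ produces a degree-8 radicand, not a quartic. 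One must instead use a Byrd--Friedman entry for a pair of complex conjugate roots, or, as the paper does, the M\"obius substitution $z=\frac{1-(1+\sqrt5)t}{(1-\sqrt5)t-1}$ followed by the imaginary-modulus transformation 160.02, to land on $-2i\sqrt{\tfrac{\sqrt5-1}{2}}\,K\bigl(i\tfrac{\sqrt5-1}{2}\bigr)$. Also keep track of the covering factor when passing from $|x|=a^2$ to $|x_1|=a$ (the $x_1$-circle double covers the $x$-circle), as the paper does. These are fixable computational points rather than a failure of the method, but as written the decisive step of the proof is not carried out, and its sketched implementation is not the one that actually works.
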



\begin{proof}
As before, we assume that $\sqrt{\frac{1+\sqrt{5}-\sqrt{2\sqrt{5}+2}}{2}}< a< \sqrt{\frac{1+\sqrt{5}+\sqrt{2\sqrt{5}+2}}{2}}$. The extreme cases follow by continuity. Since we have
\begin{align*}
\int_{\varphi_*(|x|=a^2)} \omega=& \int_{|x|=a^2} \varphi^*\omega,
\end{align*}
we proceed to find $ \varphi^*\omega$.

By looking at the equations for $\varphi$, we have
\[dX = -\frac{2(dx+dy)}{(x+y+2)^2}.\]
By differentiating $R_{-2}(x,y)$, we get
\[(2x(y+1)+y^2+4y+1)dx+(2y(x+1)+x^2+4x+1)dy=0.\]
Thus, we obtain,
\[dX = \frac{2(y-x)dx}{(x+y+2)(2y(x+1)+x^2+4x+1)}.\]
Therefore $\varphi^* \omega$ is given by
\begin{align*}
\frac{dX}{2(X+Y)}=&-\frac{dx}{2y(x+1)+x^2+4x+1}.
\end{align*}
Since
\[y_\pm = x_1\frac{-(2+(x_1+x_1^{-1})^2)\pm \sqrt{4+(x_1+x_1^{-1})^4}}{2(x_1+x_1^{-1})},\]
where $x=x_1^2$, we have, when considering $y_-$,
\[\varphi^* \omega= -\frac{2dx_1}{x_1\sqrt{4+(x_1+x_1^{-1})^4}}.\]

Therefore, we find,
\begin{align*}
\int_{\varphi_*(|x|=a^2)} \omega=& -\int_{|x_1| = a} \frac{dx_1}{x_1\sqrt{4+(x_1+x_1^{-1})^4}}.
\end{align*}
We have omitted a factor of 2 because we are integrating in the full circle over $x_1$, which corresponds to twice the circle over $x$.

Setting $x_1=ae^{i\theta}$, we get,
\begin{align*}
\int_{\varphi_*(|x|=a^2)} \omega
=&-\int_{-\pi}^{\pi} \frac{i d\theta}{\sqrt{4+(ae^{i\theta}+a^{-1}e^{-i\theta})^4}}\\
=&-\int_{-\pi}^{0} \frac{i d\theta}{\sqrt{4+(ae^{i\theta}+a^{-1}e^{-i\theta})^4}}-\int_{0}^{\pi} \frac{i d\theta}{\sqrt{4+(ae^{i\theta}+a^{-1}e^{-i\theta})^4}}\\
=&-\int_{0}^{\pi} \frac{i d\theta}{\sqrt{4+(ae^{-i\theta}+a^{-1}e^{i\theta})^4}}-\int_{0}^{\pi} \frac{i d\theta}{\sqrt{4+(ae^{i\theta}+a^{-1}e^{-i\theta})^4}}\\
=&- 2 i \re \int_{0}^{\pi} \frac{ d\theta}{\sqrt{4+(ae^{i\theta}+a^{-1}e^{-i\theta})^4}}\\
=&- 2 i \re \int_{0}^{\frac{\pi}{2}} \frac{ d\theta}{\sqrt{4+(ae^{i\theta}+a^{-1}e^{-i\theta})^4}}- 2 i \re \int_{\frac{\pi}{2}}^\pi \frac{ d\theta}{\sqrt{4+(ae^{i\theta}+a^{-1}e^{-i\theta})^4}}\\
=&- 2 i \re \int_{0}^{\frac{\pi}{2}} \frac{ d\theta}{\sqrt{4+(ae^{i\theta}+a^{-1}e^{-i\theta})^4}}- 2 i \re \int_0^{\frac{\pi}{2}} \frac{ d\tau}{\sqrt{4+(ae^{i\tau}+a^{-1}e^{-i\tau})^4}}\\
=&- 4 i \re \int_{0}^{\frac{\pi}{2}} \frac{ d\theta}{\sqrt{4+(ae^{i\theta}+a^{-1}e^{-i\theta})^4}},
\end{align*}
where we did $\tau=\pi-\theta$. 

Let \[t=\frac{(ae^{i\theta}+a^{-1}e^{-i\theta})^2}{4}, \qquad dt =\frac{a^2e^{2i\theta}-a^{-2}e^{-2i\theta}}{2} id\theta=-2\sqrt{t(1-t)}d\theta.\]

This yields, 
\begin{align}\label{eq:realint}
\int_{\varphi_*(|x|=a^2)} \omega =& -\frac{i}{2} \re \int_{\frac{2 - (a^2 + a^{-2})}{4}}^{\frac{2 + (a^2 + a^{-2})}{4}}  \frac{dt}{\sqrt{t(1-t)(t^2 + 1/4 )}},
\end{align}
where the integral takes place over an arc connecting the two real points $\frac{2 - (a^2 + a^{-2})}{4}$ and $\frac{2 + (a^2 + a^{-2})}{4}$.
We close this arc with the segment of the real line connecting these two points. One can see that $|t|=\frac{a^2+a^{-2}+2\cos(2\theta)}{4}$. Choosing $\theta$ according to 
equation \eqref{eq:cos} so that $t$ is purely imaginary
shows that $|t|<\frac{1}{2}$ iff $a^4-2a^2-2-2a^{-2}+a^{-4}<0$, which is guaranteed by hypothesis (see condition \eqref{eq:aa}). 
Thus, the integrand has no poles in the interior of this region and
 the integral 
in \eqref{eq:realint} equals the integral over the corresponding real segment $\left[\frac{2 - (a^2 + a^{-2})}{4}, \frac{2 + (a^2 + a^{-2})}{4}\right]$. 
Since $(a^2+a^{-2}) \geq 2$ for any $a>0$, this segment always contains $[0,1]$. Moreover,
the polynomial under the square-root is positive only in $[0,1]$. Therefore, we get
\[\int_{\varphi_*(|x|=a^2)} \omega= -\frac{i}{2} \int_{0}^{1} \frac{dt}{\sqrt{t(1-t)(t^2 + 1/4 )}}.\]


Now we make the change of variables \[z=\frac{1-(1+\sqrt{5})t}{\left(1-\sqrt{5}\right)t-1}, \qquad t=\frac{(1+z)}{(1-\sqrt{5})z+(1+\sqrt{5})},\qquad dt = \frac{2\sqrt{5} dz}{((1-\sqrt{5})z+(1+\sqrt{5}))^2}.\]

We obtain
\begin{align*}
\int_{\varphi_*(|x|=a^2)} \omega=& -i\sqrt{\frac{\sqrt{5}-1}{2}}\int_{-1}^1 \frac{dz}{\sqrt{\left(1-z^2\right)\left(1+\left(\frac{1-\sqrt{5}}{2}\right)^2z^2\right)}}\\
=&-2i\sqrt{\frac{\sqrt{5}-1}{2}}\int_{0}^1 \frac{dz}{\sqrt{\left(1-z^2\right)\left(1+\left(\frac{1-\sqrt{5}}{2}\right)^2z^2\right)}}\\
=& -2i\sqrt{\frac{\sqrt{5}-1}{2}} K\left(i\left(\frac{\sqrt{5}-1}{2}\right) \right).
\end{align*}
\end{proof}
 
Lemmas \ref{lemma1} and \ref{lemma2} imply that the integration paths yield the same class in the homology, and this is also independent of the value of the parameter $a$ as long as $a$ satisfies the conditions that we discovered in Section \ref{sec:path}. 
Finally, we remark that the resulting integrals are purely imaginary, showing that in fact, the homology class lies in $H_1(E,\Z)^-$, which is consistent with the discussion in Remark \ref{remark}.

 \section{The integral over $\arg y$}\label{sec:arg}
In this section we compute the integrals \eqref{eq:arg}.
\begin{lem} \label{lemma15} Let $Y_-$ be the root of $S_{2,-1}(X,Y)=0$ defined by \eqref{eq:Y} and let
$a \in \R$ be such that  $\frac{\sqrt{5}-1}{2}\leq a \leq \frac{1+\sqrt{5}}{2}$.
Then
\[\frac{1}{2\pi} \int_{|X|=a} d \arg Y_-=1.\]
 \end{lem}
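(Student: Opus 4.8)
The plan is to compute the winding number of $Y_-$ around the origin as $X$ traverses the circle $|X|=a$ once. Writing $X = a e^{i\theta}$ with $\theta$ running from $-\pi$ to $\pi$, recall from \eqref{eq:Y} that $Y_- = X(-1 - \sqrt{X+1-X^{-1}})$. Since $\frac{1}{2\pi}\int_{|X|=a} d\arg X = 1$ automatically, it suffices to show that the factor $-1 - \sqrt{X+1-X^{-1}}$ contributes zero to the total change in argument, i.e. that this factor stays in a half-plane (or at least never winds around $0$) as $\theta$ varies. Equivalently, setting $w(\theta) = 1 + (a-a^{-1})\cos\theta + i(a+a^{-1})\sin\theta$ so that $\sqrt{w}$ is the branch used in the paper, I want to show $-1-\sqrt{w(\theta)}$ never vanishes and has winding number $0$ about the origin.

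First I would check non-vanishing: $-1-\sqrt{w}=0$ would force $\sqrt w = -1$, impossible since the chosen branch of the square root always has argument in $(-\pi/2,\pi/2]$, hence $\re\sqrt w \ge 0$; in particular $\re(-1-\sqrt w) \le -1 < 0$. This is the crucial observation: the factor $-1-\sqrt{w(\theta)}$ lies entirely in the left half-plane $\{\re z \le -1\}$, which is simply connected and does not contain $0$, so its contribution to $\frac{1}{2\pi}\oint d\arg$ is exactly $0$. One has to be slightly careful that $w(\theta)$ never lands on the branch cut $(-\infty,0]$ so that $\sqrt w$ is continuous along the path: $w(\theta)$ is real only when $\sin\theta = 0$, i.e. $\theta = 0$ or $\theta=\pm\pi$, giving $w = 1\pm(a-a^{-1})$. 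With $t = a-a^{-1}$ these are $1+t$ and $1-t$; for the range $\frac{\sqrt5-1}{2}\le a\le\frac{1+\sqrt5}{2}$ we have $|t|\le 1$, so $1\pm t \ge 0$, and the only possible problem point is $w=0$ when $|t|=1$ exactly. For $|t|<1$ the path avoids the cut entirely, $\sqrt w$ is continuous, and the winding-number argument gives the result cleanly; the endpoint values $|t|=1$ (i.e. $a = \frac{1\pm\sqrt5}{2}$, up to sign) then follow by continuity of the integral in $a$, exactly as the preceding lemmas handle their boundary cases.

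Assembling: $\frac{1}{2\pi}\int_{|X|=a} d\arg Y_- = \frac{1}{2\pi}\int_{|X|=a} d\arg X + \frac{1}{2\pi}\int_{|X|=a} d\arg(-1-\sqrt{X+1-X^{-1}}) = 1 + 0 = 1$, using additivity of $d\arg$ under multiplication ($\arg$ of a product is the sum of the arguments, up to locally constant integer multiples of $2\pi$, which do not affect the integral of the exact form $d\arg$). The main obstacle I anticipate is purely bookkeeping around the branch cut and the continuity of $\sqrt{w(\theta)}$ as a function of $\theta$ on the full circle — one must confirm that, within the stated range of $a$, the path $\theta \mapsto w(\theta)$ stays in $\C\setminus(-\infty,0]$ (except possibly touching $0$ at the endpoints of the $a$-range), so that the half-plane containment of $-1-\sqrt w$ is genuinely valid along a continuous path; once that is in place the argument is immediate. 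No delicate estimates are needed, in contrast with Lemma \ref{lemma-path-S}.
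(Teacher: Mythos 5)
Your proof is correct, and it shares the paper's starting point --- factor $Y_-=X\bigl(-1-\sqrt{X+1-X^{-1}}\bigr)$, extract the contribution $1$ from the factor $X$, observe that on $|X|=a$ with $|a-a^{-1}|<1$ the radicand $w=X+1-X^{-1}$ avoids the cut $(-\infty,0]$, and settle the extreme values of $a$ by continuity --- but your key step is genuinely different from the paper's. The paper shows the second factor contributes nothing by an analytic argument: it rewrites $\frac{1}{2\pi i}\oint d\log\bigl(1+\sqrt{X+1-X^{-1}}\bigr)$ as an explicit integral $I(a)$, proves the antisymmetry $I(a)=-I(a^{-1})$ via the substitution $\tau=\pi-\theta$ (hence $I(1)=0$), and then deforms the contour from $|X|=a$ to $|X|=1$, which is legitimate because $\log\bigl(1+\sqrt{X+1-X^{-1}}\bigr)$ is single-valued and holomorphic on an annulus containing both circles. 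You instead argue topologically: since the chosen branch satisfies $\re\sqrt{w}\ge 0$, the closed curve $\theta\mapsto -1-\sqrt{w(\theta)}$ stays in the half-plane $\re z\le -1$, so it cannot wind around the origin and its $d\arg$-integral vanishes outright. Your route is shorter and more self-contained (no contour deformation, no auxiliary identity), and it isolates exactly where the hypothesis on $a$ enters: only to keep $w(\theta)$ off the branch cut so that the curve is continuous and closed. As you note, it even handles $|t|=1$ directly, since the square root is continuous at $0$, though invoking continuity in $a$ as the paper does is equally acceptable. What the paper's formulation buys is a reusable computational template (deformation justified by single-valuedness of the logarithm on an annulus), which it invokes again for the analogous integrals in Lemma \ref{lemma16}. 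One cosmetic nit: the branch sends arguments in $(-\pi,\pi)$ to $(-\pi/2,\pi/2)$, so in fact $\re\sqrt{w}>0$ for $w\neq 0$; your interval ``$(-\pi/2,\pi/2]$'' is a slight misstatement, harmless because you only use $\re\sqrt{w}\ge 0$.
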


\begin{proof} As usual, assume that  $\frac{\sqrt{5}-1}{2}< a < \frac{1+\sqrt{5}}{2}$. The extreme cases follow by continuity.
First recall that $Y_-=X(-1-\sqrt{X+1-X^{-1}})$ and 
\begin{align}\label{differential}
\frac{1}{2\pi}\int_{|X|=a}d\arg Y_-=&\frac{1}{2\pi i }\int_{|X|=a}\frac{dX}{X}+\frac{1}{2\pi i }\int_{|X|=a} \frac{d(1+\sqrt{X+1-X^{-1}})}{1+\sqrt{X+1-X^{-1}}}\\
=&\frac{1}{2\pi i }\int_{|X|=a}\frac{dX}{X}+  \frac{1}{2\pi }\im \int_{|X|=a}\frac{X+X^{-1}}{f(X-X^{-1})}\frac{dX}{X}, \nonumber
\end{align}
where
\[f(t)=2(t+1+\sqrt{t+1}).\]

It is clear that \[\frac{1}{2\pi i }\int_{|X|=a}\frac{dX}{X}=1.\]

We need to prove that the second term in \eqref{differential} equals zero. Let 
\[I(a)=\frac{1}{2\pi }\im \int_{|X|=a}\frac{X+X^{-1}}{f(X-X^{-1})}\frac{dX}{X} = \frac{1}{2\pi} \re \int_0^{2\pi} \frac{ae^{i \theta}+a^{-1}e^{-i\theta}}{f(ae^{i \theta}-a^{-1}e^{-i\theta})}d\theta.\]

By setting $\tau=\pi-\theta$,
\[I(a)= \frac{1}{2\pi} \re \int_{-\pi}^{\pi} \frac{-ae^{-i \tau}-a^{-1}e^{i\tau}}{f(-ae^{-i \tau}+a^{-1}e^{i\tau})}d\tau=-I(a^{-1}).\]
In particular,
\[I(1)=0.\]

Recall that we are choosing $a$ such that $-1<a-a^{-1}<1$. This guarantees that 
\[X+1-X^{-1} \not \in (-\infty, 0] \mbox{ as long as } |X|=a.\]
Therefore we can take a branch of the square root and further a branch of the logarithm so that 
$g(X)=\log(1+\sqrt{X+1-X^{-1}})$ is well-defined and holomorphic in an open ring containing $|X|=a$ and $|X|=1$. 
This shows that $I(a)=I(1)=0$ for $a$ such that $-1<a-a^{-1}<1$.

\end{proof}

\begin{lem} \label{lemma16} Let $y_-$ be the root of $R_{-2}(x,y)=0$ defined by \eqref{eq:y1} and \eqref{eq:y}, and let
$a \in \R$ be such that $\sqrt{\frac{1+\sqrt{5}-\sqrt{2\sqrt{5}+2}}{2}}\leq a \leq \sqrt{\frac{1+\sqrt{5}+\sqrt{2\sqrt{5}+2}}{2}}$.
Then
\[\frac{1}{2\pi} \int_{|x|=a^2} d \arg y_-=\frac{1}{2}.\]
 \end{lem}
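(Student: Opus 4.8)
The plan is to mimic the strategy of Lemma \ref{lemma15} as closely as possible, replacing the curve $S_{2,-1}$ by $R_{-2}$ and using the change of variables $x = x_1^2$, $y_1 = y/x_1$ that was already set up in Section \ref{sec:path}. First I would write $y_- = x_1 {y_1}_-$ with ${y_1}_-$ given by \eqref{eq:y1}, so that
\[
\frac{1}{2\pi}\int_{|x|=a^2} d\arg y_- = \frac{1}{2\pi}\int_{|x_1|=a} d\arg(x_1 {y_1}_-),
\]
being careful about the factor $2$: as in the proof of Lemma \ref{lemma2}, going once around $|x_1|=a$ corresponds to going twice around $|x|=a^2$, so the full $x_1$-circle computes twice the quantity we want, and $d\arg x_1$ integrates to $1$ over the $x_1$-circle but contributes $\tfrac12$ per $x$-circle. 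This already accounts for the $\tfrac12$ in the statement, \emph{provided} the contribution of $d\arg {y_1}_-$ vanishes.

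So the heart of the argument is to show that
\[
J(a) := \frac{1}{2\pi}\int_{|x_1|=a} d\arg {y_1}_- = 0.
\]
Following Lemma \ref{lemma15}, I would factor ${y_1}_-$ using \eqref{eq:y1}: writing $s = (x_1 + x_1^{-1})^2$, we have ${y_1}_- = \bigl(-(2+s) - \sqrt{4+s^2}\bigr)/\bigl(2(x_1+x_1^{-1})\bigr)$, so $\arg {y_1}_-$ splits into the argument of $x_1 + x_1^{-1}$, the argument of $2+s+\sqrt{4+s^2}$, and a constant. The term $d\arg(x_1 + x_1^{-1})$ contributes $0$ around $|x_1|=a$ because $x_1 + x_1^{-1}$ has neither a zero nor a pole in the annulus between $|x_1|=a$ and $|x_1|=1$ (indeed $x_1+x_1^{-1}=0$ forces $|x_1|=1$, and $a \neq 1$ can be assumed, the endpoints following by continuity), so by deforming to $|x_1|=1$ and symmetry the winding is zero; alternatively one checks the winding number directly. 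The remaining term is $\tfrac{1}{2\pi}\int d\arg\bigl(2+s+\sqrt{4+s^2}\bigr)$, and here I would argue exactly as in Lemma \ref{lemma15}: condition \eqref{eq:aa}, i.e. \eqref{supercondition}, is precisely what guarantees that the quantity under the square root, $\Delta = x_1^4 + 4x_1^2 + 10 + 4x_1^{-2} + x_1^{-4}$, never lies in $(-\infty,0]$ for $|x_1|=a$ (this was established in the proof of Lemma \ref{lemma-path-R}). Hence one can choose a branch of $\sqrt{\cdot}$ and then of $\log\bigl(2+s+\sqrt{4+s^2}\bigr)$ that is holomorphic on an open annulus containing both $|x_1|=a$ and $|x_1|=1$; therefore this integral equals its value at $a=1$, and at $a=1$ the function ${y_1}_-$ is real (as computed at the start of the proof of Lemma \ref{lemma-path-R}), so $\arg$ is locally constant and the integral is $0$. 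Combining, $J(a)=0$ and the lemma follows.

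The main obstacle I anticipate is the bookkeeping around branch cuts and the factor of $2$: one must verify carefully that $2+s+\sqrt{4+s^2}$ never vanishes on the relevant annulus (so that $\log$ of it makes sense), that the chosen branch of the square root extends holomorphically across the whole annulus $1 \le |x_1| \le a$ (or $a \le |x_1| \le 1$), and that $x_1 + x_1^{-1}$ and $2 + s + \sqrt{4+s^2}$ are genuinely nonvanishing there rather than merely on the two boundary circles — the key input being that condition \eqref{supercondition} rules out $\Delta \in (-\infty,0]$ throughout, not just on $|x_1|=a$. A minor subtlety is that $2+s+\sqrt{4+s^2}$ could in principle be zero; squaring shows this forces $4+s^2 = (2+s)^2$, i.e. $s=0$, i.e. $x_1+x_1^{-1}=0$, which again needs $|x_1|=1$, so it is excluded. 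Once these holomorphy statements are in hand, the homotopy-invariance-of-winding-number argument and the reduction to $a=1$ are routine, exactly parallel to Lemma \ref{lemma15}.
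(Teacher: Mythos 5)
Your setup (the substitution $x=x_1^2$, the factor-of-two bookkeeping, the splitting of $\arg {y_1}_-$ into the arguments of $x_1+x_1^{-1}$ and of $2+s+\sqrt{4+s^2}$, and the treatment of the latter via \eqref{supercondition} and deformation to $|x_1|=1$, where it is real and positive) is essentially the paper's argument. But the pivotal claim that $\frac{1}{2\pi}\int_{|x_1|=a}d\arg(x_1+x_1^{-1})=0$ is false, and this is where your proof breaks. On $|x_1|=a$ one has $x_1+x_1^{-1}=(a+a^{-1})\cos\theta+i(a-a^{-1})\sin\theta$, an origin-centered ellipse traversed once counterclockwise if $a>1$ and once clockwise if $a<1$; equivalently, $x_1+x_1^{-1}=(x_1^2+1)/x_1$ has two zeros ($\pm i$) and one pole ($0$) inside $|x_1|<a$ when $a>1$, and no zeros but one pole when $a<1$, so the winding number is $+1$, respectively $-1$, never $0$. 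Your proposed deformation to $|x_1|=1$ is not available, because the zeros $\pm i$ lie \emph{on} the unit circle (its image is the segment $[-2,2]$, which passes through $0$); and the absence of zeros and poles strictly between the two circles would in any case only transfer the winding number from one circle to the other, not make it vanish. The $a\mapsto a^{-1}$ symmetry only shows the quantity is odd under $a\mapsto a^{-1}$, which $\mathrm{sign}(a-a^{-1})$ already satisfies. Checking the winding number ``directly,'' as you suggest, gives $\pm1$, not $0$.

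Carrying your otherwise correct decomposition through with this value yields $\frac{1}{2\pi}\int_{|x|=a^2}d\arg y_-=\frac{1}{2}\bigl(1+0-\mathrm{sign}(a-a^{-1})\bigr)$, that is $0$ for $a>1$ and $1$ for $a<1$. This integrality is forced on general grounds: since $y_-$ is invariant under $x_1\mapsto -x_1$, it is a single-valued, continuous, nonvanishing function of $x$ on $|x|=a^2$ for $a\neq 1$ (at $a=1$ it acquires a pole at $x=-1$), so the integral is a winding number and cannot equal $\frac{1}{2}$; no amount of branch-cut care can change that. You should also be aware that the step you could not justify is exactly the one the paper dismisses ``by the same reasoning as in Lemma \ref{lemma15}''; that reasoning (a holomorphic branch of the logarithm on an annulus containing $|x_1|=a$ and $|x_1|=1$) does apply to $2+s+\sqrt{4+s^2}$, whose winding is $0$, but not to $x_1+x_1^{-1}$, which vanishes at $\pm i$. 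So the gap is genuine and, as far as I can see, not repairable: for $a\neq 1$ the correct value of the integral in the statement is $0$ or $1$ according to the sign of $a-1$. (As an independent cross-check, $R_{-2}$ has no zeros on the unit torus, so $\m_{A,b}(R_{-2})$ is affine-linear in $(\log A,\log b)$ near the origin; it is precisely the jump of the $d\arg y_-$ term at $a=1$, compensating the kink of $\m_{a^2}(x+1)$, that keeps $\m_{a^2,a}(R_{-2})$ linear in $\log a$ across $a=1$, which is incompatible with a constant value $\frac{1}{2}$.)
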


\begin{proof}
As usual, assume that  $\sqrt{\frac{1+\sqrt{5}-\sqrt{2\sqrt{5}+2}}{2}}<a<\sqrt{\frac{1+\sqrt{5}+\sqrt{2\sqrt{5}+2}}{2}}$. The extreme cases follow by continuity.

Write as before $x=x_1^2$ and 
\[y_-=x_1\frac{-(2+(x_1+x_1^{-1})^2)-\sqrt{4+(x_1+x_1^{-1})^4}}{2(x_1+x_1^{-1})}.\]
Then we have
\begin{align}
 \frac{1}{2\pi}\int_{|x|=a^2}d\arg y_-=& \frac{1}{4\pi}\int_{|x_1|=a}d\arg y_-\nonumber\\
 =&\frac{1}{4\pi i} \int_{|x_1|=a} \frac{d x_1}{x_1} +\frac{1}{4\pi i} \int_{|x_1|=a}\frac{d\left(2+(x_1+x_1^{-1})^2+\sqrt{4+(x_1+x_1^{-1})^4}\right)}{2+(x_1+x_1^{-1})^2+\sqrt{4+(x_1+x_1^{-1})^4}}\label{crazyintegral}\\
 &-\frac{1}{4\pi i} \int_{|x_1|=a}\frac{d(x_1+x_1^{-1})}{x_1+x_1^{-1}}.\nonumber
 \end{align}
Notice that
\[\frac{1}{2\pi i} \int_{|x_1|=a}\frac{d(x_1+x_1^{-1})}{x_1+x_1^{-1}}=\frac{1}{2\pi i} \int_{|x_1|=a}\frac{x_1-x_1^{-1}}{x_1+x_1^{-1}}\frac{d x_1}{x_1}.\]
It can be proven that the above integral is zero by the same reasoning that we did in Lemma \ref{lemma15}. Similarly we conclude that the second integral in \eqref{crazyintegral} is zero as well. 

We get
\[ \frac{1}{2\pi}\int_{|x|=a^2}d\arg y_-=\frac{1}{4\pi i} \int_{|x_1|=a} \frac{d x_1}{x_1}=\frac{1}{2}.\]
\end{proof}
 
\section{The proof of Theorem \ref{bigthm}} \label{sec:proof}

We have now all the elements to prove Theorem \ref{bigthm}.

First consider the case of $S_{2,-1}(X,Y)$. By Lemma \ref{lemma-path-S} from Section \ref{sec:path}, there are two cases where the integration paths are closed. Either $|t|\leq 1$, which implies $|a-a^{-1}|\leq 1$
and  $a \in \left[\frac{\sqrt{5}-1}{2}, \frac{1+\sqrt{5}}{2}\right]$, or $|t|\geq 3$, which implies $a-a^{-1}\geq 3$ and $a\geq \frac{3+\sqrt{13}}{2}$
or $a-a^{-1}\leq -3$ and $0\leq a\leq \frac{-3+\sqrt{13}}{2}$.

For $a \in \left[\frac{\sqrt{5}-1}{2}, \frac{1+\sqrt{5}}{2}\right]$, by Lemma \ref{lemma-path-S},
$|Y_-|\geq a$ and $|Y_+|\leq a$. Following the discussion from Section \ref{sec:arbitrary-torus}, we have
\begin{align*}
 \m_{a,a}(S_{2,-1})=&2\log a  -\frac{1}{2\pi}\int_{|X|=a}(\eta(X,Y_-)-\log(a) d\arg Y_-) -\log a.
 \end{align*}

Lemma \ref{lemma1} from Section \ref{sec:cycle} implies that 
\[-\frac{1}{2\pi}\int_{|X|=a}\eta(X,Y_-)=-\frac{1}{2\pi}\int_{|X|=1}\eta(X,Y_-).\]
We also have by combining Lemma \ref{lemma2} and Proposition \ref{prop},
\[-\frac{1}{2\pi}\int_{|X|=1}\eta(X,Y_-)=-\frac{2}{3}\frac{1}{2\pi} \int_{\varphi_*(|x|=1)} \eta(x\circ \varphi^{-1},y_-\circ \varphi^{-1}).\]
The result of Rogers and Zudilin \eqref{eq:Rogers-Zudilin} implies
\[-\frac{1}{2\pi} \int_{\varphi_*(|x|=1)} \eta(x\circ \varphi^{-1},y_-\circ \varphi^{-1})=3L'(E_{20},0).\]
By combining all the above equations, we finally obtain
\[-\frac{1}{2\pi}\int_{|X|=a}\eta(X,Y_-)=2L'(E_{20},0).\]

On the other hand, by Lemma \ref{lemma15} from Section \ref{sec:arg},
\[\frac{1}{2\pi}\int_{|X|=a}d\arg Y_- = 1.\]

Combining all of the above,
\begin{align*}
 \m_{a,a}(S_{2,-1})=&2 \log a + 2L'(E_{20},0).
\end{align*} 

When $a\geq \frac{3+\sqrt{13}}{2}$ or $a\leq \frac{-3+\sqrt{13}}{2}$,  Lemma \ref{lemma-path-S} implies that we have 
$|Y_\pm|\geq a$. Following the discussion from Section \ref{sec:arbitrary-torus}, we have
\begin{align*}
 \m_{a,a}(S_{2,-1})=&2\log a -\frac{1}{2\pi}\int_{|X|=a}(\eta(X,Y_-)-\log(a) d\arg Y_-) -\log a\\
 &-\frac{1}{2\pi}\int_{|X|=a}(\eta(X,Y_+)-\log(a) d\arg Y_+) -\log a\\
 =&-\frac{1}{2\pi}\int_{|X|=a}(\eta(X,X^3-X)-\log(a) d\arg (X^3-X)) \\
 =& -\frac{1}{2\pi}\int_{|X|=a}(\log|X|d\arg (X^3-X) - \log|X^3-X|d \arg X -\log(a) d\arg (X^3-X)) \\
 =& \m_{a}(X^3-X)\\
 =& \m(a^3X^3-aX),
 \end{align*}
where we have used that $Y_+Y_-=-X^3+X$. 

On the one hand, when $a\geq \frac{3+\sqrt{13}}{2}>1$, we have that 
\[\m(a^3X^3-aX)=3\log a +\m(X^2-1/a^2)=3 \log a.\]

On the other hand, when $0<a\leq \frac{-3+\sqrt{13}}{2}<1$, we have that
\[\m(a^3X^3-aX)=3\log a +\m(X^2-1/a^2)=3 \log a+2\log a^{-1}=\log a.\]


Now we work with $R_{-2}(x,y)$. By Lemma \ref{lemma-path-R}, the only case that we can consider is $\sqrt{\frac{1+\sqrt{5}-\sqrt{2\sqrt{5}+2}}{2}}\leq a \leq \sqrt{\frac{1+\sqrt{5}+\sqrt{2\sqrt{5}+2}}{2}}$, where we recall that we have $|x|=a^2$
while $|y_-|\geq a$ and $|y_+|\leq a$. By the discussion from Section \ref{sec:arbitrary-torus}, we have
\begin{align*}
\m_{a^2,a}(R_{-2})-\m_{a^2}(x+1)=&2\log a -\frac{1}{2\pi} \int_{|x|=a^2, |y_-|\geq a} (\eta(x,y_-)-  2\log(a) d\arg y_-) -\log a. 
\end{align*}
By combining Lemma \ref{lemma2} from Section \ref{sec:cycle} together with formula \eqref{eq:Rogers-Zudilin}, we can write
\[-\frac{1}{2\pi}\int_{|x|=a^2}\eta(x,y_-)=-\frac{1}{2\pi} \int_{|x|=1} \eta(x,y_-)=3L'(E_{20},0).\]

On the other hand, by  Lemma \ref{lemma16} from Section \ref{sec:arg},
\[\frac{1}{2\pi}\int_{|x|=a^2}d\arg y_- = \frac{1}{2}.\]

Finally, for $a\geq 1$,
\[\m_{a^2}(x+1)=\m(a^2x+1)=2\log a,\]
while for $a\leq 1$,
\[\m_{a^2}(x+1)=\m(a^2x+1)=0.\]

Putting everything together,
\begin{align*}
\m_{a^2,a}(R_{-2})=&2\log a +2 \log \max\{a,1\} +3 L'(E_{20},0).
\end{align*}

This completes the proof of our main result. 

\section{Conclusion}
There are several directions for further exploration. The most immediate question that we have is  
the completion of the statement of Theorem \ref{bigthm}, in the sense that we would like to give formulas for 
$\m_{a,b}(S_{2,-1})$ and $\m_{a,b}(R_{-2})$ for any positive parameters $a$ and $b$. This is a challenging problem, as it requires to integrate $\eta(x,y)$ in a path that is not closed and cannot be easily identified as a cycle in the homology group.

A different direction would be to consider other polynomials from Boyd's families. 

Finally, it would be also natural to explore this new definition of Mahler measure over arbitrary tori for arbitrary polynomials in a more general context
and to relate it to other constructions, such as the Ronkin function associated to amoebas (see \cite{amoeba} for further details). 

\section*{Acknowledgments}
We are grateful to Marie-Jos\'e Bertin for providing us a copy of Touafek's doctoral thesis \cite{Touafek-thesis}.

\end{document}